\numberwithin{equation}{section}
\newcommand\keywords[1]{\textbf{Keywords:} #1}
\newcommand{\MSC}[1]{\textbf{MSC2020:} #1}
\newtheorem{theorem}{Theorem}[section]
\newtheorem*{theorem*}{Theorem}
\newtheorem{lemma}{Lemma}[section]
\newtheorem{remark}{Remark}[section]
\numberwithin{equation}{section}
\newcommand{\Rmnum}[1]{\expandafter\@slowromancap\romannumeral #1@}
\begin{document}

\title{An Efficient Space-Time Two-Grid Compact Difference Scheme for the Two-Dimensional Viscous Burgers' Equation}

\author[1]{Xiangyi Peng\thanks{E-mail: pxymath18@smail.xtu.edu.cn}}
\author[1]{Lisen Ding\thanks{E-mail: dingmath15@smail.xtu.edu.cn}}
\author[2]{Wenlin Qiu\thanks{E-mail: qwllkx12379@163.com; Corresponding author.}}
\affil[1]{Hunan Key Laboratory for Computation and Simulation in Science and Engineering, School of Mathematics and Computational Science, Xiangtan University, Xiangtan 411105, China.}
\affil[2]{School of Mathematics, Shandong University, Jinan 250100, China.}

\maketitle

\begin{abstract}
  This work proposes an efficient space-time two-grid compact difference (ST-TGCD) scheme for solving the two-dimensional (2D) viscous Burgers' equation subject to initial and periodic boundary conditions. The proposed approach combines a compact finite difference discretization with a two-grid strategy to achieve high computational efficiency without sacrificing accuracy. In the coarse-grid stage, a fixed-point iteration is employed to handle the nonlinear system, while in the fine-grid stage, linear temporal and cubic spatial Lagrange interpolations are used to construct initial approximations. The final fine-grid solution is refined through a carefully designed linearized correction scheme. Rigorous analysis establishes unconditional convergence of the method, demonstrating second-order accuracy in time and fourth-order accuracy in space. Numerical experiments verify the theoretical results and show that the ST-TGCD scheme reduces CPU time by more than 70\% compared with the traditional nonlinear compact difference (NCD) method, while maintaining comparable accuracy. These findings confirm the proposed scheme as a highly efficient alternative to conventional nonlinear approaches.
\end{abstract}

\keywords{Space-time two-grid method, compact difference scheme, 2D Burgers' equation, convergence analysis, high-order accuracy.} 

\vskip 2mm
\MSC{65M06, 65M15, 65M55}

\section{Introduction}
In this paper, we consider the numerical solution of the 2D viscous Burgers' equation (BE) by using a space-time two-grid compact difference method. The governing equation is given by
\begin{equation}\label{main_equation}
u_t+u(u_x+u_y)-\lambda{\Delta} u=0, \quad (x,y)\in \mathbb{R}^2, \quad 0<t\leq T,
\end{equation}
subject to the initial condition
\begin{equation}\label{initial_condition}
u(x,y,0)=u_0(x,y), \quad (x,y)\in \mathbb{R}^2,
\end{equation}
and the periodic boundary conditions
\begin{equation}\label{boundary_condition}
u(x,y,t)=u(x+L_1,y,t), \quad u(x,y,t)=u(x,y+L_2,t), \quad (x,y)\in\mathbb{R}^2, \quad 0<t\leq T,
\end{equation}
 where ${\Delta}$ is the classic spatial Laplacian operator, $\lambda>0$ is the kinematic viscosity coefficient, $L_1$ and $L_2$ denote the positive periods in the $x$- and $y$-directions, respectively.

The numerical simulation of nonlinear convection-diffusion partial differential equations (PDEs) has long been a central topic in computational fluid dynamics. The Burgers' equation has received extensive attention within this field. First formulated by Bateman \cite{bateman1915some} in 1915, the equation was later employed in 1948 by the Dutch physicist Burgers as a mathematical model for turbulence \cite{burgers1948mathematical}, as represented by \eqref{main_equation}. In recognition of Burgers’s contribution, the equation now bears his name. Subsequent studys have revealed that the BE serves as a simplified  model for a wide range of physical phenomena, including shock waves \cite{kreiss1986convergence,Laforgue}, gas dynamics \cite{Brio,Kundu}, traffic flow \cite{musha1978traffic,yu2002analysis}, and so on.

Although the BE can be converted into the linear heat equation via the Hopf-Cole transformation and an exact solution can therefore be derived \cite{Fletcher}, this analytical solution is expressed as an infinite series that is too complex for practical use. Consequently, numerical methods have been developed to approximate the BE, which not only provide an alternative strategy for solving the BE itself, but also serve as a blueprint for tackling more complex equations such as Navier–Stokes equations \cite{LBYtwo}, Korteweg–de Vries (KdV) equation \cite{miles1981korteweg} and Kuramoto–Sivashinsky (KS) equation \cite{akrivis1992finite}. Hence, devising highly accurate and efficient numerical approaches for the BE are of fundamental significance.

Over the past few decades, a wide variety of numerical techniques have been proposed, including finite-element methods \cite{Varoglu,caldwell1987solution}, B-spline finite-element methods \cite{ALI1992325,kutluay2004numerical}, finite-difference methods \cite{bahadir2003fully,xu2009second}, spectral methods \cite{Guo1,Guo2}, spline collocation methods \cite{daug2005numerical,arora2013numerical}, operator splitting methods \cite{holden1999operator,holden2013operator} and so forth. Many of these approaches lead to nonlinear discrete systems that must be solved by Newton or Picard iterations. When the spatial mesh is fine, these nonlinear solvers require repeated assembly and solution of large algebraic systems, resulting in expensive computational costs. Several researchers choose to directly construct linear schemes, or reformulate the original nonlinear schemes into linear ones. For instance, Sun et al. \cite{sun2015two} constructed and analyzed two linear difference schemes for the BE; Wang et al. \cite{wangxp} proposed both a nonlinear compact difference scheme and its linearized counterpart; and Zhang et al. \cite{ZQF} devised a linearized compact difference method for the two-dimensional Sobolev equation with Burgers convection terms. While these linearized schemes significantly accelerate the computation compared with their nonlinear counterparts, they generally sacrifice accuracy. This trade-off vividly illustrates a perennial dilemma in scientific computing: balancing computational precision against efficiency remains an enduring challenge for researchers.

Two-grid method has emerged as a powerful tool for solving nonlinear PDEs. The core idea is to first solve the nonlinear problem on a coarse grid, and then use this solution to linearize the problem on a finer grid. This approach effectively reduces the computational burden associated with solving large nonlinear systems, while maintaining high accuracy. The two-grid method was first introduced by Xu \cite{Xu1} in 1994 for elliptic problems, and has since been extended to various types of equations, including parabolic and hyperbolic problems \cite{Xu2}. In recent years, two-grid method has been successfully applied to the PDEs with Burgers' nonlinear term $uu_x$. For example, Hu et. al. \cite{HuX} developed a spatial two-grid with mixed finite-element method for the BE. Chen et. al. \cite{chen2023two} applied a temporal two-grid finite difference method to solve one-dimensional (1D) fourth-order Sobolev-type equation with Burgers’ type nonlinearity. Peng et. al. \cite{peng2024novel} proposed a temporal two-grid compact difference method for the 1D Burgers' equation. However, as the above works, the two-grid technique was employed exclusively in either the temporal or the spatial direction, not in both at the same time. To the best of our knowledge, there are few works on space-time two-grid methods for solving the PDEs with the Burgers' nonlinear term. Shi et. al. \cite{shi2024construction} proposed a new space-time two-grid method for the 1D generalized Burgers’ equation. Gao et. al. \cite{gao2025efficient} developed  a space–time two-grid difference scheme for solving the symmetric regularized long wave equation. Nevertheless, both studies are restricted to standard (non-compact) difference schemes and address only the 1D case.

In this work, we aim to develop a novel space-time two-grid compact difference method for solving the 2D BE. the three principal steps are outlined below (see Section \ref{sec3} for full details).
\begin{itemize}
\renewcommand{\labelitemi}{} 
  \item \textbf{i. Coarse-grid solution:} A nonlinear compact difference scheme is solved on a coarse grid by using fixed-point iteration. 
  \item \textbf{ii. Interpolation:} The coarse solution is interpolated onto the fine grid using linear interpolation in time and cubic interpolation in space. 
  \item \textbf{iii. Fine-grid correction:} A linearized compact scheme is solved on the fine grid to obtain a high-accuracy numerical solution. 
\end{itemize}

Figure \ref{fig0} intuitively illustrates the two-grid algorithm step by step, using the spatial direction as an example, the temporal direction is analogous. First, yielding the coarse-grid solution at the black mesh nodes on the left subgraph; then, obtaining the interpolation solution at the red mesh nodes on the middle subgraph; finally, acquiring the corrected solution at the green mesh nodes on the right subgraph. 

 \begin{figure}[h]
     \centering
  \includegraphics[width=\textwidth]{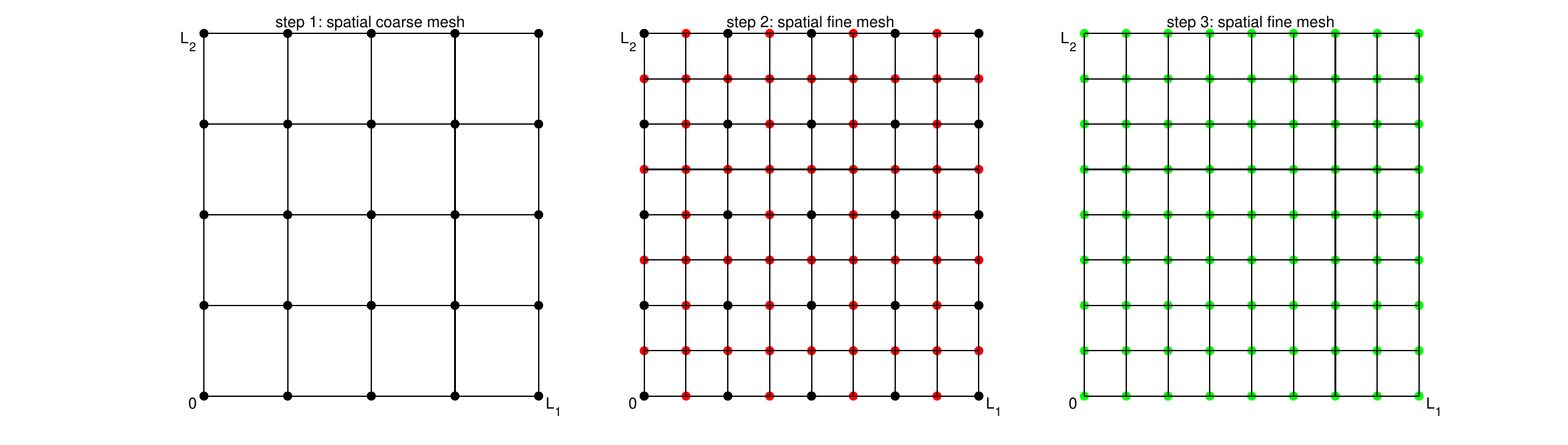}
  \caption{Spatial two-grid computing process, where we suppose $L_1=L_2,~M_1^c=M_2^c=4$, and the spatial step-size ratio $k_h=2$.}\label{fig0}
 \end{figure}

The main contributions of this work are summarized as follows.
\begin{itemize}
    \item  The numerical analysis and computation of the compact difference scheme for the 1D Burgers' equation, previously established in the literature \cite{wangxp}, are extended herein to the 2D case. 

    \item  To the best of our knowledge, this work presents the first fully space-time two-grid compact difference method for 2D nonlinear convection-diffusion problems. By integrating a space-time two-grid strategy with a fourth-order compact discretization for the 2D Burgers' equation, the proposed algorithm significantly reduces computational cost while maintaining the accuracy of the conventional nonlinear compact scheme.

    \item  A rigorous theoretical analysis of the proposed ST-TGCD scheme is conducted. Under reasonable assumptions, the unique solvability of the numerical solution is established. Furthermore, the scheme is proven to be unconditionally convergent, achieving second-order accuracy in time and fourth-order accuracy in space, which is also confirmed by numerical experiments. And, the stability of the fine-grid stage of the ST-TGCD scheme is analyzed.
\end{itemize}

The rest of this paper is organized as follows. In Section \ref{sec2}, we introduce some notations and lemmas that will be used later. In Section \ref{sec3}, we present the construction of the ST-TGCD scheme for the 2D BE. In Section \ref{sec4}, we analyze the uniquely solvability, convergence and stability of the ST-TGCD scheme. In Section \ref{sec5}, we present several numerical experiments to demonstrate the effectiveness of the ST-TGCD method. Finally, we give a short summary in Section \ref{sec6}.

\section{Notations and lemmas} \label{sec2}
To design a space-time two-grid algorithm, we need two sets of grids both in temporal and spatial directions. On the coarse grid, let $h_{c}$ be the coarse-spatial step size, and assume that there exist $M_1^c,~M_2^c\in\mathbb{N}^+$ to satisfy $L_1=h_c M_1^c,~L_2=h_c M_2^c$. Note that this assumption is easily implemented in practice and greatly facilitates both the construction of the scheme and the subsequent error analysis. Denote the coarse-temporal step size $\tau_c={T}/{N^c}$ for a positive integer $N^c$. The coarse grid points are defined as $x_p=ph_{c},~0\leq p \leq M_1^c-1$; $y_q=qh_{c},~0\leq q \leq M_2^c-1$; $t_r=r\tau_c,~0\leq r \leq N^c$. The spatial coarse mesh is denoted by $\Omega_h^c=\{(x_p,y_q)~|~1\leq p\leq M_1^c,1\leq q \leq M_2^c\}$, and temporal coarse grid $\Omega_\tau^c=\{t_r~|~0\leq r \leq N^c\}$. 

On the fine grid,  let $h_{f}=h_c/k_h,~\tau_f=\tau_c/k_\tau$ be the fine spatial and temporal step sizes, where $k_h$ and $k_\tau$ are called as the spatial and temporal step-size ratios, respectively. The numbers of space-time division on the fine grid are $M_1^f=k_h M_1^c,~ M_2^f=k_h M_2^c, ~N^f=k_\tau N^c$. The fine grid points are defined as $x_i=ih_{f},~0\leq i \leq M_1^f-1$; $y_j=jh_{f},~0\leq j \leq M_2^f-1$; $t_n=n\tau_f,~0\leq n \leq N^f$. The spatial fine mesh is denoted by $\Omega_h^f=\{(x_i,y_j)~|~1\leq i\leq M_1^f, 1\leq j \leq M_2^f\}$, and temporal fine grid $\Omega_\tau^f=\{t_n~|~0\leq n \leq N^f\}$. Obviously, the set of coarse-grid nodes is a subset of the fine-grid nodes, i.e., $\Omega_h^c\subseteq \Omega_h^f$ and $\Omega_\tau^c\subseteq \Omega_\tau^f$.  

Define the coarse grid function as $v_c=\{(v_c)_{pq}^r~|~(x_p,y_q)\in\Omega_h^c,t_r\in \Omega_\tau^c \}$, similarly, the fine grid function as $v_f=\{(v_f)_{ij}^n~|~(x_i,y_j)\in\Omega_h^f,t_n\in \Omega_\tau^f \}$. Introduce the following notations for time derivatives:
\begin{equation} \label{time_derivative}
   \begin{split}
     &(v_c)_{pq}^{r-\frac{1}{2}}=\frac{1}{2}\left[(v_c)_{pq}^{r}+(v_c)_{pq}^{r-1}\right],\quad \delta_t^c (v_c)_{pq}^{r-\frac{1}{2}}=\frac{1}{\tau_c}\left[(v_c)_{pq}^{r}-(v_c)_{pq}^{r-1}\right], \\
&(v_f)_{ij}^{n-\frac{1}{2}}=\frac{1}{2}\left[(v_f)_{ij}^{n}+(v_f)_{ij}^{n-1}\right],\quad \delta_t^f (v_f)_{ij}^{n-\frac{1}{2}}=\frac{1}{\tau_f}\left[(v_f)_{ij}^{n}-(v_f)_{ij}^{n-1}\right].  
   \end{split}
\end{equation}

To simplify the notations, we set $v_{ab}^d=(v_c)_{pq}^{r}$ or $(v_f)_{ij}^{n}$, $(x_a,y_b,t_d)\in \Omega_h^\sigma \times \Omega_\tau^\sigma$, where $\sigma=c, f$. Then, the notations for time derivatives in \eqref{time_derivative} can be rewritten as 
\begin{align*}
&v_{ab}^{d-\frac{1}{2}}=\frac{1}{2}\left(v_{ab}^{d}+v_{ab}^{d-1}\right),\quad \delta_t^\sigma v_{ab}^{d-\frac{1}{2}}=\frac{1}{\tau_\sigma}\left(v_{ab}^{d}-v_{ab}^{d-1}\right),\quad \sigma=c,f.
\end{align*}
The notations for the space derivatives are defined as follows:
\begin{align*}
&\delta_x^\sigma v_{a-\frac{1}{2},b}^{d}=\frac{1}{h_{\sigma}}\left(v_{ab}^{d}-v_{a-1,b}^{d}\right), \quad \delta_y^\sigma v_{a,b-\frac{1}{2}}^{d}=\frac{1}{h_{\sigma}}\left(v_{ab}^{d}-v_{a,b-1}^{d}\right),\\
&\widehat{\delta}_x^\sigma v_{ab}^{d}=\frac{1}{2h_{\sigma}}\left(v_{a+1,b}^{d}-v_{a-1,b}^{d}\right), \quad \widehat{\delta}_y^\sigma v_{ab}^{d}=\frac{1}{2h_{\sigma}}\left(v_{a,b+1}^{d}-v_{a,b-1}^{d}\right),\\
&\delta_{xx}^\sigma v_{ab}^{d}=\frac{1}{h_{\sigma}}\left(\delta_x^\sigma v_{a+\frac{1}{2},b}^{d}-\delta_x^\sigma v_{a-\frac{1}{2},b}^{d}\right), \quad \delta_{yy}^\sigma v_{ab}^{d}=\frac{1}{h_{\sigma}}\left(\delta_y^\sigma v_{a,b+\frac{1}{2}}^{d}-\delta_y^\sigma v_{a,b-\frac{1}{2}}^{d}\right),\\
&\widehat{\delta}_h^\sigma v_{ab}^{d}=(\widehat{\delta}_x^\sigma+\widehat{\delta}_y^\sigma)v_{ab}^{d},\quad \sigma=c,f.
\end{align*}
Furthermore, denote $(u v)_{ab}=u_{ab}v_{ab}$ and introduce useful bilinear operators as
\begin{equation} \label{bilinear_operators}
\begin{aligned}
&\psi^\sigma_x(u,v)_{ab} = \frac{1}{3}\left(u_{ab}\widehat{\delta}_x^\sigma v_{ab} + \widehat{\delta}_x^\sigma(u v)_{ab}\right), \quad 1\leq a\leq M_1^\sigma,~1\leq b\leq M_2^\sigma,\\
&\psi^\sigma_y(u,v)_{ab} = \frac{1}{3}\left(u_{ab}\widehat{\delta}_y^\sigma v_{ab} + \widehat{\delta}_y^\sigma(u v)_{ab}\right), \quad 1\leq a\leq M_1^\sigma,~1\leq b\leq M_2^\sigma,\\
&\psi_h^\sigma(u,v)_{ab} = \frac{1}{3}\left(u_{ab}\widehat{\delta}^\sigma_h v_{ab} + \widehat{\delta}^\sigma_h(u v)_{ab}\right), \quad 1\leq a\leq M^\sigma_1,~1\leq b\leq M^\sigma_2,
\end{aligned}
\end{equation}
where $\sigma=c$ or $f$. The bilinear operators $\psi^\sigma_x,\psi^\sigma_y,\psi^\sigma_h$ are used to approximate the nonlinear term $u(u_x+u_y)$ in \eqref{main_equation}. Denote the mesh-function spaces
\begin{align*}
\mathcal{V}_h^\sigma=\{v~|~v=\{v_{ab}\},v_{a+M_1^\sigma,b}=v_{ab},~v_{a,b+M_2^\sigma}=v_{ab},~(x_a,y_b)\in\Omega_h^\sigma, \sigma=c,f\},
\end{align*}
where $\mathcal{V}_h^c$ and $\mathcal{V}_h^f$ denotes the coarse and fine mesh-function spaces, respectively.   
For any mesh functions $u,v\in {\mathcal{V}}_h^\sigma$, we define the following inner products
\begin{equation*}
\begin{split}
   &\langle u,v \rangle_\sigma=h_{\sigma}^2 \sum_{a=1}^{M_1^\sigma}\sum_{b=1}^{M_2^\sigma} u_{ab}v_{ab}, \quad (\delta_x^\sigma u,\delta_x^\sigma v)_\sigma=h_{\sigma}^2 \sum_{a=1}^{M_1^\sigma}\sum_{b=1}^{M_2^\sigma} (\delta_x^\sigma u_{a-\frac{1}{2},b})(\delta_x^\sigma v_{a-\frac{1}{2},b}),\\
   &(\delta_y^\sigma u,\delta_y^\sigma v)_\sigma=h_{\sigma}^2 \sum_{a=1}^{M_1^\sigma}\sum_{b=1}^{M_2^\sigma} (\delta_y^\sigma u_{a,b-\frac{1}{2}})(\delta_y^\sigma v_{a,b-\frac{1}{2}}), \quad \sigma=c,f,
\end{split}
\end{equation*}
and the corresponding norms (seminorm) as
\begin{equation*}
   \begin{split}
   &\|v\|_\sigma=\sqrt{\langle v,v \rangle_\sigma}, \qquad \qquad \|\delta_x^\sigma v\|_{\sigma}=\sqrt{(\delta_x^\sigma v,\delta_x^\sigma v)_\sigma}, \\
   & \|\delta_y^\sigma v\|_{\sigma}=\sqrt{(\delta_y^\sigma v,\delta_y^\sigma v)_\sigma},\quad |v|_{1,\sigma}=\sqrt{\|\delta_x^\sigma v\|_{\sigma}^2+\|\delta_y^\sigma v\|_{\sigma}^2}.
   \end{split}
\end{equation*}
In the following, we list some helpful lemmas to derive the error estimates for the ST-TGCD scheme.
\begin{lemma} \label{lem2.1}
  \cite{Sunbook} For any mesh functions $u,v\in {\mathcal{V}}_h^\sigma~(\sigma=c,f)$, the following identities hold
\begin{equation*}
\begin{split}
    &\langle u, \delta_{xx}^\sigma v \rangle_\sigma=-(\delta_x^\sigma u, \delta_x^\sigma v)_\sigma,\qquad \langle u, \delta_{yy}^\sigma v \rangle_\sigma=-(\delta_y^\sigma u, \delta_y^\sigma v)_\sigma, \\
    &\langle u, \widehat{\delta}_{x}^\sigma v \rangle_\sigma= -\langle\widehat{\delta}_{x}^\sigma u,  v \rangle_\sigma, \qquad \langle u, \widehat{\delta}_{y}^\sigma v \rangle_\sigma= -\langle\widehat{\delta}_{y}^\sigma u,  v \rangle_\sigma.
\end{split}
\end{equation*}
\end{lemma}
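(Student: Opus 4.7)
The plan is to prove each of the four identities by a direct discrete summation-by-parts (Abel transformation), exploiting the periodicity encoded in the definition of $\mathcal{V}_h^\sigma$ to kill all boundary contributions. The four statements split naturally into two pairs: the second-order identities for $\delta_{xx}^\sigma$ and $\delta_{yy}^\sigma$, and the centered-difference identities for $\widehat{\delta}_x^\sigma$ and $\widehat{\delta}_y^\sigma$. Within each pair I would prove the $x$-direction case and obtain the $y$-direction case by a verbatim relabeling $(a,b,M_1^\sigma) \leftrightarrow (b,a,M_2^\sigma)$.

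For the first identity, I would substitute the definition $\delta_{xx}^\sigma v_{ab} = h_\sigma^{-1}\bigl(\delta_x^\sigma v_{a+\frac{1}{2},b} - \delta_x^\sigma v_{a-\frac{1}{2},b}\bigr)$ into $\langle u,\delta_{xx}^\sigma v\rangle_\sigma$, split the resulting double sum, and perform the index shift $a \mapsto a-1$ in one of the two pieces. Because both $u$ and $v$ satisfy $u_{a+M_1^\sigma,b} = u_{ab}$, the shifted sum runs over the same range $1 \le a \le M_1^\sigma$ and no endpoint terms survive. Regrouping the summand gives $-h_\sigma^2 \sum_{a,b}(\delta_x^\sigma u_{a-\frac{1}{2},b})(\delta_x^\sigma v_{a-\frac{1}{2},b}) = -(\delta_x^\sigma u,\delta_x^\sigma v)_\sigma$, which is exactly the claim.

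For the third identity, I would write $\widehat{\delta}_x^\sigma v_{ab} = (v_{a+1,b}-v_{a-1,b})/(2h_\sigma)$ and split $\langle u,\widehat{\delta}_x^\sigma v\rangle_\sigma$ into two sums; applying $a\mapsto a-1$ in the first and $a\mapsto a+1$ in the second and invoking periodicity again to realign the index ranges produces $-h_\sigma^2\sum_{a,b} \tfrac{u_{a+1,b}-u_{a-1,b}}{2h_\sigma}\, v_{ab} = -\langle\widehat{\delta}_x^\sigma u,v\rangle_\sigma$. The $y$-direction versions of both calculations are identical after interchanging roles.

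There is no real obstacle here: each identity reduces to one Abel transformation, and the only point requiring a little care is the bookkeeping of the periodic index shifts, for which the hypothesis $u,v\in\mathcal{V}_h^\sigma$ is precisely what is needed. This is why the result is simply quoted from \cite{Sunbook}; I would state all four identities together and write out only the $x$-direction cases, remarking that the $y$-direction ones are obtained by symmetry.
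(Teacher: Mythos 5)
Your proof is correct: each identity follows, exactly as you describe, from a single discrete summation by parts in which the periodicity built into $\mathcal{V}_h^\sigma$ realigns the shifted index ranges and removes all boundary terms. The paper itself gives no proof — it simply cites \cite{Sunbook} — and your argument is the standard one underlying that reference, so there is nothing further to compare.
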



\begin{lemma} \label{lem2.2}
    For any mesh function $v\in {\mathcal{V}}_h^\sigma,~(\sigma=c,f)$, the following estimates hold
   \begin{equation} \label{equ2.3}
\|\widehat{\delta}_x^\sigma v\|_{\sigma}\leq \|{\delta}_x^\sigma v\|_{\sigma}, \quad \|\widehat{\delta}_y^\sigma v\|_{\sigma}\leq \|{\delta}_y^\sigma v\|_{\sigma}, \quad  \|\widehat{\delta}_h^\sigma v\|_{\sigma}\leq  \sqrt{2} |v|_{1,\sigma}.
   \end{equation}
\end{lemma}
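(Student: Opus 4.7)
The plan is to prove all three estimates by reducing the centered differences to averages of one-sided differences and then exploiting periodicity of mesh functions in $\mathcal{V}_h^\sigma$. First I would rewrite the centered difference as the arithmetic mean of the forward and backward differences,
\begin{equation*}
\widehat{\delta}_x^\sigma v_{ab} \;=\; \frac{v_{a+1,b}-v_{a-1,b}}{2h_\sigma} \;=\; \frac{1}{2}\bigl(\delta_x^\sigma v_{a+\frac{1}{2},b} + \delta_x^\sigma v_{a-\frac{1}{2},b}\bigr),
\end{equation*}
and similarly for $\widehat{\delta}_y^\sigma v_{ab}$. Squaring and applying the elementary inequality $(\alpha+\beta)^2\le 2(\alpha^2+\beta^2)$ gives a pointwise bound of $|\widehat{\delta}_x^\sigma v_{ab}|^2$ by $\tfrac{1}{2}\bigl[(\delta_x^\sigma v_{a+\frac{1}{2},b})^2+(\delta_x^\sigma v_{a-\frac{1}{2},b})^2\bigr]$.

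Next I would multiply by $h_\sigma^2$ and sum over $1\le a\le M_1^\sigma$, $1\le b\le M_2^\sigma$. Here is where the periodicity built into $\mathcal{V}_h^\sigma$ is essential: shifting the index $a\mapsto a+1$ in one of the two sums shows that $\sum_{a,b}(\delta_x^\sigma v_{a+\frac{1}{2},b})^2 = \sum_{a,b}(\delta_x^\sigma v_{a-\frac{1}{2},b})^2$, so the two contributions combine into a single copy of $\|\delta_x^\sigma v\|_\sigma^2$. This yields $\|\widehat{\delta}_x^\sigma v\|_\sigma\le \|\delta_x^\sigma v\|_\sigma$, and the argument for the $y$-direction is identical.

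For the third inequality I would use the definition $\widehat{\delta}_h^\sigma v = \widehat{\delta}_x^\sigma v + \widehat{\delta}_y^\sigma v$ and again invoke $(\alpha+\beta)^2 \le 2(\alpha^2+\beta^2)$ pointwise, then sum to obtain $\|\widehat{\delta}_h^\sigma v\|_\sigma^2 \le 2\bigl(\|\widehat{\delta}_x^\sigma v\|_\sigma^2 + \|\widehat{\delta}_y^\sigma v\|_\sigma^2\bigr)$. Combining with the first two estimates and the definition of the seminorm $|v|_{1,\sigma}^2 = \|\delta_x^\sigma v\|_\sigma^2 + \|\delta_y^\sigma v\|_\sigma^2$ produces the factor $\sqrt{2}$ in the bound.

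The calculation is essentially routine; the only subtle point is the index-shift argument, which silently relies on the periodic extension $v_{a+M_1^\sigma,b}=v_{ab}$, $v_{a,b+M_2^\sigma}=v_{ab}$ to ensure that boundary contributions cancel. I would make this explicit in the write-up to avoid any ambiguity at the edges of the summation range.
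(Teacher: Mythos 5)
Your proposal is correct and follows essentially the same route as the paper: the key step in both is writing $\widehat{\delta}_x^\sigma v_{ab}$ as the average of the two one-sided differences $\delta_x^\sigma v_{a\pm\frac{1}{2},b}$ (with periodicity guaranteeing that the shifted sums coincide with $\|\delta_x^\sigma v\|_\sigma^2$), and then combining the $x$- and $y$-estimates to get the factor $\sqrt{2}$. The only cosmetic difference is that you work pointwise with $(\alpha+\beta)^2\le 2(\alpha^2+\beta^2)$ and sum, whereas the paper applies the triangle inequality directly at the norm level; both are equally valid.
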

\begin{proof}
   According to the definitions of $\widehat{\delta}_x^\sigma v_{ab}$ and ${\delta}_x^\sigma v_{ab}$, we have
\begin{equation*}
\begin{split}
\widehat{\delta}_x^\sigma v_{ab}=\frac{1}{2h_{\sigma}}\left[v_{a+1,b}-v_{a-1,b}\right]=\frac{1}{2}\left(\delta_x^\sigma v_{a-\frac{1}{2},b}+\delta_x^\sigma v_{a+\frac{1}{2},b}\right),
\end{split}
\end{equation*}
which implies that
\begin{equation*}
\begin{split}
\|\widehat{\delta}_x^\sigma v\|_{\sigma} \leq \frac{1}{2}\|\delta_x^\sigma v\|_{\sigma}+\frac{1}{2}\|\delta_x^\sigma v\|_{\sigma}=\|\delta_x^\sigma v\|_{\sigma}.
\end{split}
\end{equation*}
Thus, we get the first inequality in \eqref{equ2.3}. The second inequality can be proved similarly. For the third inequality, we have
\begin{equation*}
\begin{split}
\|\widehat{\delta}_h^\sigma v\|_{\sigma} &= \|\widehat{\delta}_x^\sigma v+\widehat{\delta}_y^\sigma v\|_{\sigma} \leq \|\widehat{\delta}_x^\sigma v\|_{\sigma}+\|\widehat{\delta}_y^\sigma v\|_{\sigma} \leq \|\delta_x^\sigma v\|_{\sigma}+\|\delta_y^\sigma v\|_{\sigma} \leq \sqrt{2} |v|_{1,\sigma}.
\end{split}
\end{equation*}
Thus, we have proved the lemma.
\end{proof}

\begin{lemma} \label{lemma2.3}
   \cite{Sunbook} For any mesh function $v\in {\mathcal{V}}_h^\sigma,~(\sigma=c,f)$, we have the inverse estimates
\begin{equation*}
\|\delta_x^\sigma v\|_\sigma \leq \frac{2}{h_{\sigma}}\|v\|_{\sigma}, \quad \|\delta_y^\sigma v\|_\sigma \leq \frac{2}{h_{\sigma}}\|v\|_{\sigma}.
\end{equation*}   
And, together with Lemma \ref{lem2.2}, we have
\begin{equation} \label{eqt2.6}
\|\widehat{\delta}_x^\sigma v\|_\sigma \leq \frac{2}{h_{\sigma}}\|v\|_{\sigma},\quad  \|\widehat{\delta}_y^\sigma v\|_\sigma \leq \frac{2}{h_{\sigma}}\|v\|_{\sigma}.
\end{equation}
\end{lemma}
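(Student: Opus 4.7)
The plan is to establish the inverse estimates by a direct expansion of $\|\delta_x^\sigma v\|_\sigma^2$ using its definition, followed by the elementary bound $(a-b)^2 \le 2(a^2+b^2)$ and an appeal to periodicity of $\mathcal{V}_h^\sigma$ to re-index the shifted sum.

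First I would write
$$\|\delta_x^\sigma v\|_\sigma^2 = h_\sigma^2 \sum_{a=1}^{M_1^\sigma}\sum_{b=1}^{M_2^\sigma} \frac{1}{h_\sigma^2}\bigl(v_{ab}-v_{a-1,b}\bigr)^2 = \sum_{a=1}^{M_1^\sigma}\sum_{b=1}^{M_2^\sigma}\bigl(v_{ab}-v_{a-1,b}\bigr)^2,$$
so that the step size cancels inside the squared difference. Next I would invoke $(v_{ab}-v_{a-1,b})^2 \le 2v_{ab}^2 + 2v_{a-1,b}^2$ termwise. Using the periodic identification $v_{a+M_1^\sigma,b}=v_{ab}$ built into $\mathcal{V}_h^\sigma$, the inner sum $\sum_a v_{a-1,b}^2$ coincides (after an index shift $a\mapsto a+1$) with $\sum_a v_{ab}^2$, so the bound collapses to $4\sum_{a,b} v_{ab}^2 = \frac{4}{h_\sigma^2}\|v\|_\sigma^2$. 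Taking square roots yields the first inverse estimate, and the inequality for $\delta_y^\sigma$ follows by the symmetric argument swapping the roles of the spatial indices.

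For the two bounds on the centered differences $\widehat{\delta}_x^\sigma v$ and $\widehat{\delta}_y^\sigma v$, I would simply chain the just-proved estimates with the first two inequalities of Lemma \ref{lem2.2}, which give $\|\widehat{\delta}_x^\sigma v\|_\sigma \le \|\delta_x^\sigma v\|_\sigma \le \tfrac{2}{h_\sigma}\|v\|_\sigma$, and similarly in $y$. No new computation is needed at this stage.

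There is no substantial obstacle: the estimate is a standard discrete inverse inequality. The only point requiring care is ensuring that the re-indexing of $\sum_a v_{a-1,b}^2$ produces exactly $\sum_a v_{ab}^2$ with no residual boundary terms, which is precisely what the periodic structure of $\mathcal{V}_h^\sigma$ guarantees; without that periodicity one would incur a boundary correction that would spoil the clean constant $2$.
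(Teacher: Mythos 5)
Your proof is correct: expanding $\|\delta_x^\sigma v\|_\sigma^2$, applying $(a-b)^2\le 2(a^2+b^2)$ termwise, and using the periodicity of $\mathcal{V}_h^\sigma$ to re-index the shifted sum gives exactly $\frac{4}{h_\sigma^2}\|v\|_\sigma^2$, and the centered-difference bounds then follow by chaining with Lemma \ref{lem2.2}, as the statement itself indicates. The paper offers no proof of this lemma — it simply cites \cite{Sunbook} — and your argument is the standard one behind that citation, so there is nothing to add.
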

\begin{lemma} \label{lemma2.4}
   \cite{ZQF} For any mesh functions $u,v\in {\mathcal{V}}_h^\sigma,~(\sigma=c,f)$, we have
   \begin{equation*}
  \langle\psi_x^\sigma(u,v), v \rangle_\sigma = 0, \quad \langle\psi_y^\sigma(u,v), v \rangle_\sigma = 0,  \quad \langle\psi_h^\sigma(u,v), v \rangle_\sigma = 0.
   \end{equation*}
\end{lemma}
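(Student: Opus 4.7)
The plan is to prove the three identities by directly exploiting the skew‐symmetry structure built into the bilinear operators $\psi_x^\sigma,\psi_y^\sigma,\psi_h^\sigma$, using the periodic summation‐by‐parts identities from Lemma \ref{lem2.1}. The key observation is that the factor $\tfrac{1}{3}$ and the particular combination $u_{ab}\widehat{\delta}_\cdot v_{ab}+\widehat{\delta}_\cdot(uv)_{ab}$ are chosen precisely so that, when tested against $v$, the two pieces cancel.

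First I would treat the $x$-direction identity. Starting from the definition in \eqref{bilinear_operators}, I write
\begin{equation*}
\langle\psi_x^\sigma(u,v),v\rangle_\sigma
=\tfrac{1}{3}\langle u\,\widehat{\delta}_x^\sigma v,\,v\rangle_\sigma+\tfrac{1}{3}\langle\widehat{\delta}_x^\sigma(uv),\,v\rangle_\sigma .
\end{equation*}
Applying the discrete integration‐by‐parts rule $\langle\widehat{\delta}_x^\sigma w,v\rangle_\sigma=-\langle w,\widehat{\delta}_x^\sigma v\rangle_\sigma$ from Lemma \ref{lem2.1} to the second term (with $w=uv$) gives $\langle\widehat{\delta}_x^\sigma(uv),v\rangle_\sigma=-\langle uv,\widehat{\delta}_x^\sigma v\rangle_\sigma$. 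Since $(uv)_{ab}=u_{ab}v_{ab}$ pointwise, the two remaining inner products are equal, so they cancel and the sum vanishes. The $y$-direction identity $\langle\psi_y^\sigma(u,v),v\rangle_\sigma=0$ is proved by the same computation with $\widehat{\delta}_x^\sigma$ replaced by $\widehat{\delta}_y^\sigma$, using the companion identity in Lemma \ref{lem2.1}.

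For the third identity, I would simply note that $\widehat{\delta}_h^\sigma=\widehat{\delta}_x^\sigma+\widehat{\delta}_y^\sigma$ by definition, so linearity of the construction in \eqref{bilinear_operators} yields $\psi_h^\sigma(u,v)=\psi_x^\sigma(u,v)+\psi_y^\sigma(u,v)$. Taking the inner product with $v$ and invoking the two identities already established finishes the proof.

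I do not expect a genuine obstacle here: the only subtle point is making sure that the summation‐by‐parts formulas are applied without any boundary terms, which is exactly the reason the mesh‐function spaces $\mathcal{V}_h^\sigma$ were defined to consist of $L_1$- and $L_2$-periodic grid functions. Once periodicity is used to justify Lemma \ref{lem2.1}, the argument is a one‐line cancellation, and the whole lemma is essentially algebraic.
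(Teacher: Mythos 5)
Your proof is correct: expanding $\langle\psi_x^\sigma(u,v),v\rangle_\sigma$, using the skew-symmetry $\langle\widehat{\delta}_x^\sigma(uv),v\rangle_\sigma=-\langle uv,\widehat{\delta}_x^\sigma v\rangle_\sigma$ from Lemma \ref{lem2.1} (valid by periodicity), and noting $\langle u\,\widehat{\delta}_x^\sigma v,v\rangle_\sigma=\langle uv,\widehat{\delta}_x^\sigma v\rangle_\sigma$ gives the cancellation, and the $h$-case follows by linearity since $\psi_h^\sigma=\psi_x^\sigma+\psi_y^\sigma$. The paper itself only cites \cite{ZQF} for this lemma, and your argument is exactly the standard summation-by-parts proof intended there (the only cosmetic quibble being that the factor $\tfrac13$ plays no role in the cancellation).
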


\begin{lemma} \label{lemma2.5}
   Let $g(x,y)\in C^5\big([x_{a-1},x_{a+1}]\times[y_{b-1},x_{b+1}]\big)$, $G(x,y):= g_{xx}(x,y)$ and $H(x,y):=g_{yy}(x,y)$, then we have
\begin{equation*}
   \begin{split}
      &g(x_a,y_b)\Big(g_x(x_a,y_b)+g_y(x_a,y_b)\Big)\\
       =~&\psi_h^\sigma(g_{ab},g_{ab})-\frac{h_{\sigma}^2}{2}\left[\psi_x^\sigma(G_{ab},g_{ab})+\psi_y^\sigma(H_{ab},g_{ab})\right]+O(h_{\sigma}^4), \quad ~\sigma=c,f.
   \end{split}
\end{equation*}
\end{lemma}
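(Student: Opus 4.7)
The plan is to verify the identity by Taylor-expanding every centered difference that appears in $\psi_h^\sigma(g_{ab},g_{ab})$, $\psi_x^\sigma(G_{ab},g_{ab})$, and $\psi_y^\sigma(H_{ab},g_{ab})$ about the point $(x_a,y_b)$ up to order $h_\sigma^4$, and then checking that the $O(h_\sigma^2)$ corrections introduced by the two correction operators exactly cancel the leading truncation error produced by $\psi_h^\sigma(g_{ab},g_{ab})$. Throughout I suppress the argument $(x_a,y_b)$ on the right-hand side.

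First I would compute, using the $C^5$ smoothness and the standard centered-difference expansion,
\begin{equation*}
\widehat{\delta}_x^\sigma g_{ab}= g_x+\tfrac{h_\sigma^2}{6}g_{xxx}+O(h_\sigma^4),
\qquad
\widehat{\delta}_x^\sigma(g^2)_{ab}= (g^2)_x+\tfrac{h_\sigma^2}{6}(g^2)_{xxx}+O(h_\sigma^4),
\end{equation*}
and their $y$-analogues. Using $(g^2)_x=2gg_x$ and $(g^2)_{xxx}=6g_xg_{xx}+2gg_{xxx}$, together with the definition of $\psi_h^\sigma$, adding the $x$- and $y$-contributions gives
\begin{equation*}
\psi_h^\sigma(g_{ab},g_{ab})
= g(g_x+g_y)+\tfrac{h_\sigma^2}{3}(g_xg_{xx}+g_yg_{yy})+\tfrac{h_\sigma^2}{6}g(g_{xxx}+g_{yyy})+O(h_\sigma^4).
\end{equation*}
This is the key expansion: the first term is the desired continuous quantity, while the remaining $O(h_\sigma^2)$ residual is precisely what the correction terms are designed to remove.

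Next I would expand the correction pieces. Because they are premultiplied by $h_\sigma^2/2$, only the leading-order terms of $\psi_x^\sigma(G_{ab},g_{ab})$ and $\psi_y^\sigma(H_{ab},g_{ab})$ are needed, with remainders $O(h_\sigma^2)$. A direct substitution of $G=g_{xx}$, $H=g_{yy}$ into \eqref{bilinear_operators} and one-term Taylor expansion of each centered difference yields
\begin{equation*}
\psi_x^\sigma(G_{ab},g_{ab})= \tfrac{1}{3}\bigl(2g_xg_{xx}+gg_{xxx}\bigr)+O(h_\sigma^2),\qquad
\psi_y^\sigma(H_{ab},g_{ab})= \tfrac{1}{3}\bigl(2g_yg_{yy}+gg_{yyy}\bigr)+O(h_\sigma^2),
\end{equation*}
so that
\begin{equation*}
\tfrac{h_\sigma^2}{2}\bigl[\psi_x^\sigma(G_{ab},g_{ab})+\psi_y^\sigma(H_{ab},g_{ab})\bigr]
= \tfrac{h_\sigma^2}{3}(g_xg_{xx}+g_yg_{yy})+\tfrac{h_\sigma^2}{6}g(g_{xxx}+g_{yyy})+O(h_\sigma^4).
\end{equation*}
Subtracting this from the expansion of $\psi_h^\sigma(g_{ab},g_{ab})$ in the previous paragraph, the two bracketed $O(h_\sigma^2)$ residuals coincide term by term and cancel, leaving $g(g_x+g_y)+O(h_\sigma^4)$, which is the claim.

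The only genuine obstacle is bookkeeping: one must carry the Taylor expansions of $\widehat{\delta}_x^\sigma g$, $\widehat{\delta}_x^\sigma(g^2)$ (and their $y$-counterparts) consistently to the $h_\sigma^2$ level, compute $(g^2)_{xxx}$ correctly by the product rule to get the $6g_xg_{xx}+2gg_{xxx}$ coefficient, and ensure that the factor $1/3$ in the definition of the bilinear operators is applied uniformly. The hypothesis $g\in C^5$ on the $3\times 3$ stencil is exactly what is needed to make the $O(h_\sigma^4)$ remainder rigorous via Taylor's theorem with integral remainder, and no other tools beyond elementary calculus are required.
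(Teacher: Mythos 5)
Your computation is correct: the centered-difference expansions, the product-rule identity $(g^2)_{xxx}=6g_xg_{xx}+2gg_{xxx}$, and the resulting cancellation of the $\tfrac{h_\sigma^2}{3}(g_xg_{xx}+g_yg_{yy})+\tfrac{h_\sigma^2}{6}g(g_{xxx}+g_{yyy})$ residual against the correction term all check out, and $g\in C^5$ on the $3\times 3$ stencil is exactly what is needed to control the $O(h_\sigma^4)$ remainders (for both $\widehat{\delta}_x^\sigma(g^2)$ and the one-term expansion of $\widehat{\delta}_x^\sigma(Gg)$, whose third $x$-derivative involves $g^{(5)}$). The paper, by contrast, gives no computation at all: it simply declares the lemma an immediate consequence of the corresponding one-dimensional result (Lemma 2.4 of \cite{wangxp}), implicitly using that the operators split coordinate-wise, $\psi_h^\sigma(g,g)=\psi_x^\sigma(g,g)+\psi_y^\sigma(g,g)$, since $\widehat{\delta}_h^\sigma=\widehat{\delta}_x^\sigma+\widehat{\delta}_y^\sigma$, so the 1D identity can be applied separately in each direction and summed. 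Your route is the self-contained version of that hidden argument: it costs some Taylor bookkeeping but makes the fourth-order cancellation explicit and verifiable without consulting the 1D reference; the paper's route is shorter but relies entirely on the external lemma and the (unstated) additive decomposition. Either is acceptable; if you wanted to mirror the paper more closely, you could note the decomposition $\psi_h^\sigma=\psi_x^\sigma+\psi_y^\sigma$ explicitly and then quote the 1D expansion once per direction, which collapses your calculation to two lines.
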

\begin{proof}
   This result follows immediately from Lemma 2.4 in \cite{wangxp}.
\end{proof}
\begin{lemma} \label{lemma2.6}
   \cite{ZQF} Suppose that the mesh functions $u,v,w,S,T \in {\mathcal{V}}_h^\sigma$, satisfying
   \begin{equation*}
      \begin{split}
         v_{ab} = \delta_{xx}^\sigma u_{ab} - \frac{h_{\sigma}^2}{12} \delta_{xx}^\sigma v_{ab} + S_{ab}, \quad (x_a, y_b) \in \Omega_h^\sigma,\\
         w_{ab} = \delta_{yy}^\sigma u_{ab} - \frac{h_{\sigma}^2}{12} \delta_{yy}^\sigma w_{ab} + T_{ab}, \quad (x_a, y_b) \in \Omega_h^\sigma,
      \end{split}
   \end{equation*}
   thus we have 
   \begin{equation*}
      \langle v+w, u \rangle_\sigma \leq -|u|_{1,\sigma}^2 -\frac{h_{\sigma}^2}{18}\Big(\|v\|_\sigma^2 +\|w\|_\sigma^2\Big) + \frac{h_{\sigma}^2}{12}\langle v,S \rangle_\sigma + \frac{h_{\sigma}^2}{12}\langle w,T \rangle_\sigma + \langle u, S+T \rangle_\sigma.
   \end{equation*}
\end{lemma}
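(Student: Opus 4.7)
The plan is to treat the two equations for $v$ and $w$ separately, derive a single-direction estimate of the form
$\langle v, u \rangle_\sigma \leq -\|\delta_x^\sigma u\|_\sigma^2 - \tfrac{h_\sigma^2}{18}\|v\|_\sigma^2 + \tfrac{h_\sigma^2}{12}\langle v,S\rangle_\sigma + \langle u, S\rangle_\sigma$, and then add the analogous bound coming from the $w$-equation. Splitting along the two coordinate directions is natural because the hypotheses on $v$ and $w$ are completely decoupled, and $|u|_{1,\sigma}^2$ itself splits as $\|\delta_x^\sigma u\|_\sigma^2 + \|\delta_y^\sigma u\|_\sigma^2$.

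For the $v$-equation, rewritten as $v + \tfrac{h_\sigma^2}{12}\delta_{xx}^\sigma v = \delta_{xx}^\sigma u + S$, I would take the inner product twice: once with $u$ and once with $v$. Pairing with $u$ and using the summation-by-parts identities in Lemma~\ref{lem2.1} gives
\begin{equation*}
\langle v, u\rangle_\sigma - \frac{h_\sigma^2}{12}(\delta_x^\sigma v, \delta_x^\sigma u)_\sigma = -\|\delta_x^\sigma u\|_\sigma^2 + \langle u, S\rangle_\sigma,
\end{equation*}
while pairing with $v$ yields
\begin{equation*}
\|v\|_\sigma^2 - \frac{h_\sigma^2}{12}\|\delta_x^\sigma v\|_\sigma^2 = -(\delta_x^\sigma u, \delta_x^\sigma v)_\sigma + \langle v, S\rangle_\sigma.
\end{equation*}
Solving the second identity for the cross term $(\delta_x^\sigma u, \delta_x^\sigma v)_\sigma$ and substituting into the first eliminates every gradient cross-product and produces an exact identity for $\langle v, u\rangle_\sigma$ in which the only unfavourable contribution is $\tfrac{h_\sigma^4}{144}\|\delta_x^\sigma v\|_\sigma^2$. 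I would then absorb this term via the inverse estimate $\|\delta_x^\sigma v\|_\sigma \leq \tfrac{2}{h_\sigma}\|v\|_\sigma$ from Lemma~\ref{lemma2.3}, which bounds it by $\tfrac{h_\sigma^2}{36}\|v\|_\sigma^2$. The remaining $\|v\|_\sigma^2$ coefficient collapses via $-\tfrac{1}{12} + \tfrac{1}{36} = -\tfrac{1}{18}$, delivering precisely the stated constant.

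An identical argument applied to the $w$-equation, with $\delta_y^\sigma$, $w$, $T$ playing the roles of $\delta_x^\sigma$, $v$, $S$, yields the symmetric estimate. Summing the two inequalities and collecting the $-\|\delta_x^\sigma u\|_\sigma^2 - \|\delta_y^\sigma u\|_\sigma^2 = -|u|_{1,\sigma}^2$ contribution gives the result. The proof is essentially algebraic; the only point requiring care is the bookkeeping of the coefficients $\tfrac{h_\sigma^2}{12}$ arising from the compact-stencil corrections, and ensuring that the sharp inverse inequality is used exactly once so that the final constant $\tfrac{1}{18}$ is obtained rather than something larger. I do not anticipate a genuine obstacle beyond this careful arithmetic.
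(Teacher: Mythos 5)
Your proof is correct: pairing the compact relation once with $u$ and once with $v$ (resp.\ $w$), eliminating the cross term $(\delta_x^\sigma u,\delta_x^\sigma v)_\sigma$, absorbing $\tfrac{h_\sigma^4}{144}\|\delta_x^\sigma v\|_\sigma^2$ by the inverse estimate of Lemma \ref{lemma2.3}, and adding the two directional bounds yields exactly the stated inequality with the constant $-\tfrac{1}{12}+\tfrac{1}{36}=-\tfrac{1}{18}$. The paper itself gives no proof and simply cites \cite{ZQF}, where the argument proceeds along essentially these same lines, so your write-up is a faithful reconstruction of the standard energy/inverse-estimate proof.
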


\begin{lemma} \label{lemma2.7}
   \cite{Sloan} (Discrete Gr\"{o}nwall's inequality) If $\{A_n\}_{n=0}^{\infty}$ and $\{B_n\}_{n=0}^{\infty}$ are two non-negative real sequences and $\{d_n\}_{n=0}^{\infty}$ is a non-descending and non-negative sequence satisfying
    $$A_n\leq B_n+\sum\limits_{j=0}^{n-1}d_j A_j,\quad n\geq 1,$$
    then one has
    $$A_n\leq B_n\exp \left( \sum\limits_{j=0}^{n-1}d_j\right) ,\quad n\geq 1.$$
   
\end{lemma}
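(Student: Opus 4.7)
The plan is an induction on $n$ driven by the elementary estimate $1+x \leq e^x$ for $x \geq 0$, which is the bridge from a product of factors $(1+d_j)$ to the exponential of their sum. First I would introduce $E_n := \sum_{j=0}^{n-1} d_j A_j$ so that the hypothesis reads simply $A_n \leq B_n + E_n$, with $E_0 = 0$ and the telescoping identity $E_{n+1} = E_n + d_n A_n$. Substituting the hypothesis into this identity gives the linear first-order recursion
$$E_{n+1} \leq (1+d_n) E_n + d_n B_n,$$
which I would iterate explicitly to obtain the closed form
$$E_n \leq \sum_{j=0}^{n-1} d_j B_j \prod_{k=j+1}^{n-1}(1+d_k).$$

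Combining $A_n \leq B_n + E_n$ with this formula and invoking the (implicit) monotonicity of $\{B_n\}$ to bound $B_j \leq B_n$ on each term, one is left with
$$A_n \leq B_n \Bigl(1 + \sum_{j=0}^{n-1} d_j \prod_{k=j+1}^{n-1}(1+d_k)\Bigr) = B_n \prod_{j=0}^{n-1}(1+d_j),$$
the combinatorial identity on the right being verified by a short reverse induction on the upper index. Applying $1+d_j \leq e^{d_j}$ to each factor in the final product and multiplying then yields $A_n \leq B_n\exp\bigl(\sum_{j=0}^{n-1} d_j\bigr)$, which is the desired bound.

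The main technical care is the bookkeeping in solving the linear recursion for $E_n$ and the subsequent telescoping of the product into $\prod_{j=0}^{n-1}(1+d_j)$; neither step is deep but both are easy to slip on. A secondary point worth flagging is the role of monotonicity of $B_n$: without it the cleanest output of the recursion is the sharper but clumsier estimate $A_n \leq B_n + \sum_{j=0}^{n-1} d_j B_j \exp\bigl(\sum_{k=j+1}^{n-1} d_k\bigr)$, and one needs $B_j \leq B_n$ to collapse this into the stated form. Since the statement is quoted from \cite{Sloan} and the ST-TGCD convergence analysis applies it only to cumulative truncation error quantities that are manifestly non-decreasing, this is exactly the version I would prove.
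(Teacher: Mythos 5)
The paper does not prove this lemma at all: it is quoted from the cited reference \cite{Sloan} and then used as a black box in Theorems \ref{Th4.4} and the stability estimate, so there is no in-paper argument to compare yours against. Your proof is the standard one and is essentially correct: the recursion $E_{n+1}\leq(1+d_n)E_n+d_nB_n$, its discrete variation-of-constants solution, the telescoping identity $1+\sum_{j=0}^{n-1}d_j\prod_{k=j+1}^{n-1}(1+d_k)=\prod_{j=0}^{n-1}(1+d_j)$, and $1+x\leq e^x$ deliver exactly the stated bound. Two caveats, both really defects of the statement rather than of your argument, are worth recording. First, as you already note, the proof needs $B_j\leq B_n$, whereas the lemma as printed attaches the monotonicity ("non-descending") to $\{d_n\}$, which your argument never uses; the intended hypothesis is surely monotone $B_n$ (or one must settle for the sharper form $A_n\leq B_n+\sum_{j=0}^{n-1}d_jB_j\exp\bigl(\sum_{k=j+1}^{n-1}d_k\bigr)$), and in the paper's applications $B_n$ is indeed non-decreasing. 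Second, a point you do not flag: the hypothesis is only assumed for $n\geq1$, so $A_0\leq B_0$ is not literally available, yet your first step $E_1=d_0A_0\leq d_0B_0$ (and any proof of the stated conclusion) requires it; without it the lemma is false as written (take $A_0$ large, $B_n\equiv0$). This is harmless in the paper, where the quantity playing the role of $A_0$ vanishes, but a careful write-up should either assume the inequality at $n=0$ as well or keep $A_0$ explicit in the bound.
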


\begin{remark}
   Throughout this paper, $\widetilde{C}$ denotes a generic constant whose value may vary from line to line but is independent of the temporal and spatial step sizes. 
\end{remark}

\begin{remark}
   In order to facilitate writing and reading, we simplify the notation in the fine grid as follows:
   $$
\langle \cdot,\cdot \rangle:=\langle \cdot,\cdot \rangle_f, \quad \|\cdot\|:=\|\cdot\|_f, \quad |\cdot|_{1}=|\cdot|_{1,f}.
   $$
\end{remark}

\section{Construction of the space-time two-grid compact difference scheme} \label{sec3}
In this section, we shall derive a space-time two-grid compact difference (ST-TGCD) scheme for the 2D viscous Burgers' equation \eqref{main_equation}. Throughout the paper, we only consider a period domain $\Omega=(0,L_1)\times(0,L_2)$.
Let us introduce two new variables $v=u_{xx}$ and $w=u_{yy}$, then the equation \eqref{main_equation} in a period can be rewritten as
\begin{align}
\label{eq3.1} &u_t+uu_x+uu_y=\lambda(v+w), \quad (x,y)\in \Omega, \quad 0<t\leq T,\\
\label{eq3.2} &v=u_{xx}, \quad w=u_{yy}, \quad (x,y)\in\Omega, \quad 0\leq t\leq T.
\end{align}
Define the mesh functions
$$
U_{ab}^d=u(x_a,y_b,t_d), \quad V_{ab}^d=v(x_a,y_b,t_d), \quad W_{ab}^d=w(x_a,y_b,t_d), \quad (x_a,y_b)\in\Omega_h^\sigma,~t_d\in\Omega_\tau^\sigma,~\sigma=c,f.
$$
Using Lemma \ref{lemma2.5}, we have
\begin{equation} \label{eq:uu_x_compact_scheme}
u(u_x+u_y)(x_a, y_b, t_d) = \psi_h^\sigma(U^d_{ab}, U^d_{ab}) - \frac{h_{\sigma}^2}{2}\left[\psi_x^\sigma({V}^d_{ab}, U^d_{ab})+\psi_y^\sigma({W}^d_{ab}, U^d_{ab})\right] + O(h_{\sigma}^4).
\end{equation}
By considering \eqref{eq3.1} at the point $(x_a,y_b,t_{d-\frac{1}{2}})$ in a single periodic domain, and combining Taylor expansion with \eqref{eq:uu_x_compact_scheme}, we have
\begin{equation} \label{eq3.4}
\begin{split}
 &\delta_{t}^\sigma U^{d-\frac{1}{2}}_{ab} + \psi_h^{\sigma}(U^{d-\frac{1}{2}}_{ab}, U^{d-\frac{1}{2}}_{ab}) - \frac{h_{\sigma}^{2}}{2}\left[\psi_x^\sigma({V}^{d-\frac{1}{2}}_{ab}, U^{d-\frac{1}{2}}_{ab})+\psi_y^\sigma({W}^{d-\frac{1}{2}}_{ab}, U^{d-\frac{1}{2}}_{ab})\right] \\ 
&\quad = \lambda(V^{d-\frac{1}{2}}_{ab} + W^{d-\frac{1}{2}}_{ab}) + (P_\sigma)^{d-\frac{1}{2}}_{ab}, \quad (x_{a}, y_{b})\in \Omega_h^\sigma ,\quad t_d\in\Omega_\tau^\sigma\setminus\{0\}, \quad \sigma=c,f,
\end{split}
\end{equation}
where $(P_\sigma)^{d-\frac{1}{2}}_{ab}$ is the truncation error term, which is of order $\mathcal{O}(h_{\sigma}^4 + \tau_\sigma^2)$. Then, considering \eqref{eq3.2} at the point $(x_a,y_b,t_{d})$, we have
\begin{align}
V_{ab}^d = \delta_{xx}^\sigma U_{ab}^d - \frac{h_{\sigma}^2}{12} \delta_{xx}^\sigma V_{ab}^d + (Q_\sigma)_{ab}^d, \quad (x_{a}, y_{b})\in \Omega_h^\sigma ,\quad t_d\in\Omega_\tau^\sigma, \quad \sigma=c,f, \label{eq3.5}\\ 
W_{ab}^d = \delta_{yy}^\sigma U_{ab}^d - \frac{h_{\sigma}^2}{12} \delta_{yy}^\sigma W_{ab}^d + (R_\sigma)_{ab}^d,\quad  (x_{a}, y_{b})\in \Omega_h^\sigma ,\quad t_d\in\Omega_\tau^\sigma, \quad \sigma=c,f,\label{eq3.6}
\end{align}
 where
 $$
|(Q_\sigma)_{ab}^d|=\mathcal{O}(h_{\sigma}^4), \quad |(R_\sigma)_{ab}^d|=\mathcal{O}(h_{\sigma}^4), \quad (x_{a}, y_{b})\in \Omega_h^\sigma ,\quad t_d\in\Omega_\tau^\sigma, \quad \sigma=c,f.
 $$
 Combining the initial condition \eqref{initial_condition} with boundary conditions \eqref{boundary_condition}, and omitting the small terms in \eqref{eq3.4}-\eqref{eq3.6}. Replacing $U_{ab}^{d}, V_{ab}^{d}, W_{ab}^{d}$ with their numerical approximations $u_{ab}^{d}, v_{ab}^{d}, w_{ab}^{d}$. Furthermore, if we take $\sigma=f$ and replace index $(x_a,y_b,t_d)$ with $(x_i,y_j,t_n)$, i.e., consider on the fine mesh, then the following nonlinear compact difference (NCD) scheme for the 2D viscous BE \eqref{main_equation}-\eqref{boundary_condition} is obtained:
\begin{align}
\notag &\delta_{t}^f u^{n-\frac{1}{2}}_{ij} + \psi_h^{f}(u^{n-\frac{1}{2}}_{ij}, u^{n-\frac{1}{2}}_{ij}) - \frac{h_{f}^{2}}{2}\left[\psi_x^f({v}^{n-\frac{1}{2}}_{ij}, u^{n-\frac{1}{2}}_{ij})+\psi_y^f({w}^{n-\frac{1}{2}}_{ij}, u^{n-\frac{1}{2}}_{ij})\right]\\ 
\label{eq3.7}   &\quad = \lambda(v^{n-\frac{1}{2}}_{ij} + w^{n-\frac{1}{2}}_{ij}), \quad (x_{i}, y_{j})\in \Omega_h^f ,\quad t_n\in\Omega_\tau^f\setminus\{0\},\\
\label{eq3.8} &v_{ij}^n = \delta_{xx}^f u_{ij}^n - \frac{h_{f}^2}{12} \delta_{xx}^f v_{ij}^n, \quad (x_{i}, y_{j})\in \Omega_h^f ,\quad t_n\in\Omega_\tau^f, \\
\label{eq3.9} &w_{ij}^n = \delta_{yy}^f u_{ij}^n - \frac{h_{f}^2}{12} \delta_{yy}^f w_{ij}^n, \quad (x_{i}, y_{j})\in \Omega_h^f ,\quad t_n\in\Omega_\tau^f,\\
\label{eq3.10} &u_{ij}^0 = u_0(x_i,y_j), \quad (x_{i}, y_{j})\in \Omega_h^f,\\
\notag &  u_{ij}^n = u_{i+M_1^f,j}^n, ~ u_{ij}^n = u_{i,j+M_2^f}^n, \quad v_{ij}^n = v_{i+M_1^f,j}^n, ~ v_{ij}^n = v_{i,j+M_2^f}^n,\\
\label{eq3.11} &w_{ij}^n = w_{i+M_1^f,j}^n, ~ w_{ij}^n = w_{i,j+M_2^f}^n, \quad (x_{i}, y_{j})\in \Omega_h^f, \quad t_n\in\Omega_\tau^f.
\end{align}
 \begin{remark}
The scheme \eqref{eq3.7}-\eqref{eq3.11} extends from the 1D case \cite{wangxp}. We solve the nonlinear system \eqref{eq3.7}-\eqref{eq3.11} by using the fixed-point iteration method. Nevertheless, excessively large values of $M_1^f, M_2^f, N^f$ severely degrade computational efficiency.
\end{remark}

Next, we construct the ST-TGCD scheme, which consists of three main steps as follows: 
\vskip 0.2cm
\textbf{Step 1.} First, we solve the above nonlinear system (NCD scheme) on the coarse grid $\Omega_h^c\times\Omega_\tau^c$ to get the coarse grid solution $(u_c)_{pq}^{r}$, namely, by solving the following system:
\begin{align}
 \label{eq3.12} &\delta_{t}^c (u_c)^{r-\frac{1}{2}}_{pq} + \psi_h^c\Big((u_c)^{r-\frac{1}{2}}_{pq}, (u_c)^{r-\frac{1}{2}}_{pq}\Big) - \frac{h_{c}^{2}}{2}\left[\psi_x^c\Big((v_c)^{r-\frac{1}{2}}_{pq}, (u_c)^{r-\frac{1}{2}}_{pq}\Big)+\psi_y^c\Big((w_c)^{r-\frac{1}{2}}_{pq}, (u_c)^{r-\frac{1}{2}}_{pq}\Big)\right]  \\ 
 \notag &\quad = \lambda\Big((v_c)^{r-\frac{1}{2}}_{pq} + (w_c)^{r-\frac{1}{2}}_{pq}\Big), \quad (x_{p}, y_{q})\in \Omega_h^c ,\quad t_r\in\Omega_\tau^c\setminus\{0\},\\
\label{eq3.13} &(v_c)_{pq}^r = \delta_{xx}^c (u_c)_{pq}^r - \frac{h_{c}^2}{12} \delta_{xx}^c (v_c)_{pq}^r, \quad (x_{p}, y_{q})\in \Omega_h^c ,\quad t_r\in\Omega_\tau^c, \\
\label{eq3.14} &(w_c)_{pq}^r = \delta_{yy}^c (u_c)_{pq}^r - \frac{h_{c}^2}{12} \delta_{yy}^c (w_c)_{pq}^r, \quad (x_{p}, y_{q})\in \Omega_h^c ,\quad t_r\in\Omega_\tau^c,\\
\label{eq3.15} &(u_c)_{pq}^0 = u_0(x_p,y_q), \quad (x_{p}, y_{q})\in \Omega_h^c,\\
\notag &  (u_c)_{pq}^r = (u_c)_{p+M_1^c,q}^r, ~ (u_c)_{pq}^r = (u_c)_{p,q+M_2^c}^r, \quad (v_c)_{pq}^r = (v_c)_{p+M_1^c,q}^r, ~ (v_c)_{pq}^r = (v_c)_{p,q+M_2^c}^r,\\
\label{eq3.16} &(w_c)_{pq}^r = (w_c)_{p+M_1^c,q}^r, ~ (w_c)_{pq}^r = (w_c)_{p,q+M_2^c}^r, \quad (x_{p}, y_{q})\in \Omega_h^c, \quad t_r\in\Omega_\tau^c.
\end{align}

\textbf{Step 2.} In temporal direction, we use the coarse grid solution $(u_c)_{pq}^{r}$ and Lagrange linear interpolation
\begin{equation}\label{eq3.17}
   \begin{split}
     (u_f)_{pk_h,qk_h}^{(r-1)k_\tau+s}&=\frac{t_{(r-1)k_\tau+s}-t_{rk_\tau}}{t_{(r-1)k_\tau}-t_{rk_\tau}}(u_c)_{pq}^{r-1}+\frac{t_{(r-1)k_\tau+s}-t_{(r-1)k_\tau}}{t_{rk_\tau}-t_{(r-1)k_\tau}}(u_c)_{pq}^{r}\\
     &=(1-\frac{s}{k_\tau})(u_c)_{pq}^{r-1}+\frac{s}{k_\tau}(u_c)_{pq}^{r},\quad 1\leq r \leq N^c,\quad 1\leq s \leq k_\tau-1,
   \end{split}
\end{equation}
where $k_h$ and $k_\tau$ are the spatial and temporal step-size ratios, respectively. And note that $(u_f)_{pk_h,qk_h}^{rk_\tau}=(u_c)_{pq}^r,~ 0\leq p \leq M_1^c,~0\leq q \leq M_2^c,~0\leq r \leq N^c$. Thus, $(u_f)_{pk_h,qk_h}^n~(0\leq n \leq N^f)$ is obtained. 

Then, in spatial direction, combining $(u_f)_{pk_h,qk_h}^n$ with cubic Lagrange interpolation to obtain the fine grid solution $(u_f)_{pk_h+l,qk_h+m}^n~(1\leq l,m \leq k_h-1)$, i.e.,
\begin{equation} \label{eq3.18}
\begin{split}
&(u_f)_{pk_h+l,qk_h+m}^n=\sum_{i=0}^3\sum_{j=0}^3 (u_f)_{(p+i)k_h,(q+j)k_h}^{n} {\mu_i(x_{pk_h+l})} {\omega_j(y_{qk_h+m})}, \\
&\quad x_{pk_h+l}\in (x_{pk_h},x_{(p+3)k_h}),~y_{qk_h+m}\in (y_{qk_h},y_{(q+3)k_h}),~0\leq p \leq M_1^c-1,~ 0\leq q \leq M_2^c-1,
\end{split}
\end{equation}
in which $\mu_i(x)$ and $\omega_j(y)$ are the cubic Lagrange interpolation basis functions defined as
$$
\mu_i(x)=\prod_{\substack{0\leq s \leq 3 \\ s\neq i}} \frac{x-x_{(p+s)k_h}}{x_{(p+i)k_h}-x_{(p+s)k_h}}, \quad \omega_j(y)=\prod_{\substack{0\leq s \leq 3 \\ s\neq j}} \frac{y-y_{(q+s)k_h}}{y_{(q+j)k_h}-y_{(q+s)k_h}}.
$$
Therefore, we can get the rough solution $(u_f)_{ij}^n$ on the fine grid for all $(x_i,y_j,t_n)\in\Omega_h^f \times \Omega_\tau^f$. Furthermore, we can compute the $(v_f)_{ij}^n$ and $(w_f)_{ij}^n$ by using the following formulas
\begin{align}
   \label{equ3.19} &(v_f)_{ij}^n = \delta_{xx}^f (u_f)_{ij}^n - \frac{h_f^2}{12}\delta_{xx}^f (v_f)_{ij}^n, \quad (x_i,y_j)\in\Omega_h^f,~t_n\in\Omega_\tau^f,\\
   \label{equ3.20} &(w_f)_{ij}^n = \delta_{yy}^f (u_f)_{ij}^n - \frac{h_f^2}{12}\delta_{yy}^f (w_f)_{ij}^n, \quad (x_i,y_j)\in\Omega_h^f,~t_n\in\Omega_\tau^f.
\end{align}

\vskip 0.2cm
\textbf{Step 3.} Finally, we correct the fine grid solution $(u_f)_{ij}^n$ to obtain higher precision numerical solution $u_{ij}^n$ by solving the following linear system:

\begin{align}
\notag  &\delta_{t}^f u^{n-\frac{1}{2}}_{ij} + \psi_h^f\left((u_f)^{n-\frac{1}{2}}_{ij}, u^{n-\frac{1}{2}}_{ij}\right) - \frac{h_{f}^{2}}{2}\left[\psi_x^f\left((v_f)^{n-\frac{1}{2}}_{ij}, u^{n-\frac{1}{2}}_{ij}\right)+\psi_y^f\left((w_f)^{n-\frac{1}{2}}_{ij}, u^{n-\frac{1}{2}}_{ij}\right)\right]  \\ 
\label{step3-1}  &\quad = \lambda(v^{n-\frac{1}{2}}_{ij} + w^{n-\frac{1}{2}}_{ij}), \quad (x_{i}, y_{j})\in \Omega_h^f ,\quad t_n\in\Omega_\tau^f\setminus\{0\},\\
\label{step3-2} &v_{ij}^n = \delta_{xx}^f u_{ij}^n - \frac{h_{f}^2}{12} \delta_{xx}^f v_{ij}^n, \quad (x_{i}, y_{j})\in \Omega_h^f ,\quad t_n\in\Omega_\tau^f, \\
\label{step3-3} &w_{ij}^n = \delta_{yy}^f u_{ij}^n - \frac{h_{f}^2}{12} \delta_{yy}^f w_{ij}^n, \quad (x_{i}, y_{j})\in \Omega_h^f ,\quad t_n\in\Omega_\tau^f,\\
\label{step3-4} &u_{ij}^0 = u_0(x_i,y_j), \quad (x_{i}, y_{j})\in \Omega_h^f,\\
\notag &  u_{ij}^n = u_{i+M_1^f,j}^n, ~ u_{ij}^n = u_{i,j+M_2^f}^n, \quad v_{ij}^n = v_{i+M_1^f,j}^n, ~ v_{ij}^n = v_{i,j+M_2^f}^n,\\
\label{step3-5} &w_{ij}^n = w_{i+M_1^f,j}^n, ~ w_{ij}^n = w_{i,j+M_2^f}^n, \quad (x_{i}, y_{j})\in \Omega_h^f, \quad t_n\in\Omega_\tau^f.
\end{align}
Note that $\{(u_f)_{ij}^n,(v_f)_{ij}^n,(w_f)_{ij}^n\}$ in \eqref{step3-1} are all known from Step 2. Thus, the scheme \eqref{step3-1}-\eqref{step3-5} is a linear system.
\section{Unique solvability and convergence of the ST-TGCD scheme} \label{sec4}
\subsection{Unique solvability}
  For the step 1 of the ST-TGCD scheme, we can employ Browder fixed-point theorem (cf. \cite{akrivis1992finite}) to prove the existence of the solution, and then use proof by contradiction combined with the energy method to demonstrate the uniqueness. Since the proof process is almost same as that for the 1D case, one can refer to Theorems 3.3 and 3.4 in \cite{wangxp}. To avoid excessive length of the article, the detailed proof process is omitted here. For the step 2, the unique solvability of $(u_f)_{ij}^n$ is straightforward, because we just use the linear and cubic Lagrange interpolation to get $(u_f)_{ij}^n$. Therefore, we only need to derive the unique solvability of the step 3.  

\begin{theorem}
  The third-step scheme \eqref{step3-1}-\eqref{step3-5} is uniquely solvable.
\end{theorem}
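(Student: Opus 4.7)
The system (\ref{step3-1})-(\ref{step3-5}) is linear in the unknowns $u^n_{ij}, v^n_{ij}, w^n_{ij}$ once $(u_f), (v_f), (w_f)$ (known from Step 2) and the previous-level values $u^{n-1}, v^{n-1}, w^{n-1}$ are fixed. Because the number of unknowns equals the number of equations at each time level, unique solvability is equivalent to showing that the homogeneous problem admits only the trivial solution. The plan is therefore to induct on $n$: the base case is given by the initial condition (\ref{step3-4}), and at each step I would assume $u^{n-1}=v^{n-1}=w^{n-1}=0$ and show that any solution $(\tilde u^n,\tilde v^n,\tilde w^n)$ of the resulting homogeneous system must vanish.

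Under this homogeneous assumption, $\tilde u^{n-\frac{1}{2}}=\tilde u^n/2$ and $\delta_t^f \tilde u^{n-\frac{1}{2}}=\tilde u^n/\tau_f$. I would take the discrete inner product $\langle \cdot ,\tilde u^n\rangle$ of the homogeneous version of (\ref{step3-1}) on $\Omega_h^f$. The three nonlinear-looking terms
\[
\bigl\langle \psi_h^f((u_f)^{n-\frac12},\tilde u^{n-\frac12}),\tilde u^n\bigr\rangle,\quad \bigl\langle \psi_x^f((v_f)^{n-\frac12},\tilde u^{n-\frac12}),\tilde u^n\bigr\rangle,\quad \bigl\langle \psi_y^f((w_f)^{n-\frac12},\tilde u^{n-\frac12}),\tilde u^n\bigr\rangle
\]
all vanish: indeed, since the second argument of each bilinear operator is now a scalar multiple of $\tilde u^n$, Lemma \ref{lemma2.4} gives each inner product equals zero. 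This is the only delicate point, and it relies crucially on the fact that the Step 2 quantities appear only in the first slot of $\psi_x^f,\psi_y^f,\psi_h^f$, so the skew-symmetry in the second slot is preserved.

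What remains is
\[
\frac{1}{\tau_f}\|\tilde u^n\|^2 \;=\; \frac{\lambda}{2}\bigl\langle \tilde v^n+\tilde w^n,\tilde u^n\bigr\rangle.
\]
The homogeneous versions of (\ref{step3-2})-(\ref{step3-3}) have the form required by Lemma \ref{lemma2.6} with $S=T=0$, so
\[
\bigl\langle \tilde v^n+\tilde w^n,\tilde u^n\bigr\rangle \;\le\; -|\tilde u^n|_{1}^2 \;-\; \frac{h_f^2}{18}\bigl(\|\tilde v^n\|^2+\|\tilde w^n\|^2\bigr) \;\le\; 0.
\]
Combining these two displays yields $\|\tilde u^n\|^2\le 0$, hence $\tilde u^n\equiv 0$. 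Finally, substituting $\tilde u^n=0$ into (\ref{step3-2})-(\ref{step3-3}) gives $\tilde v^n=-\tfrac{h_f^2}{12}\delta_{xx}^f \tilde v^n$ and $\tilde w^n=-\tfrac{h_f^2}{12}\delta_{yy}^f \tilde w^n$; testing these against $\tilde v^n$ and $\tilde w^n$ respectively and using Lemma \ref{lem2.1} produces $\|\tilde v^n\|^2+\tfrac{h_f^2}{12}\|\delta_x^f \tilde v^n\|^2=0$ and similarly for $\tilde w^n$, forcing $\tilde v^n=\tilde w^n=0$. The induction closes and unique solvability follows. The only real obstacle is ensuring that the coefficient factors $(u_f)^{n-\frac12},(v_f)^{n-\frac12},(w_f)^{n-\frac12}$ sit in the correct slot of the bilinear operators so that Lemma \ref{lemma2.4} can be applied; everything else is a direct bookkeeping exercise.
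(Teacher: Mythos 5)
Your argument is essentially the paper's: the paper also reduces unique solvability of the linear square system to showing the homogeneous system has only the trivial solution, tests the homogeneous momentum equation with $u^n$, kills the $\psi_h^f,\psi_x^f,\psi_y^f$ terms via Lemma \ref{lemma2.4}, and invokes Lemma \ref{lemma2.6} with $S=T=0$. The one place you deviate is also where you slip. The paper does \emph{not} discard the $\|v^n\|^2+\|w^n\|^2$ contribution from Lemma \ref{lemma2.6}: it keeps the combined inequality
\begin{equation*}
\frac{1}{\tau_f}\|u^n\|^2+\frac{\lambda}{2}|u^n|_1^2+\frac{\lambda h_f^2}{36}\bigl(\|v^n\|^2+\|w^n\|^2\bigr)\leq 0,
\end{equation*}
from which $u^n$, $v^n$, $w^n$ all vanish at once. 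You instead bound the right-hand side by $0$, conclude only $\tilde u^n=0$, and then try to recover $\tilde v^n=\tilde w^n=0$ separately from the auxiliary equations.

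That separate step is where the error sits: from $\tilde v^n=-\tfrac{h_f^2}{12}\delta_{xx}^f\tilde v^n$, testing with $\tilde v^n$ and using Lemma \ref{lem2.1} gives $\|\tilde v^n\|^2=-\tfrac{h_f^2}{12}\langle\delta_{xx}^f\tilde v^n,\tilde v^n\rangle=+\tfrac{h_f^2}{12}\|\delta_x^f\tilde v^n\|^2$, i.e.\ the two terms appear with \emph{opposite} signs, not the sum-of-squares identity $\|\tilde v^n\|^2+\tfrac{h_f^2}{12}\|\delta_x^f\tilde v^n\|^2=0$ you wrote, so $\tilde v^n=0$ does not follow for free. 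The conclusion is still recoverable: either invoke the inverse estimate of Lemma \ref{lemma2.3}, which gives $\tfrac{h_f^2}{12}\|\delta_x^f\tilde v^n\|^2\leq\tfrac13\|\tilde v^n\|^2$ and hence $\tilde v^n=0$ (equivalently, note that $I+\tfrac{h_f^2}{12}\delta_{xx}^f$ is invertible, as the paper uses in \eqref{equ4.17}), or, more simply, do what the paper does and retain the $\|\tilde v^n\|^2+\|\tilde w^n\|^2$ terms from Lemma \ref{lemma2.6} so that no second argument is needed. With that repair your proof is complete and coincides with the paper's.
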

\begin{proof}
   To begin, we can know that $\{u^0,~v^0,~w^0\}$ have been determined uniquely by \eqref{step3-2}-\eqref{step3-4}. Now, we suppose that the solution $\{u_{ij}^{n-1}, v_{ij}^{n-1}, w_{ij}^{n-1}\}$ is known, and we shall demonstrate that the solution $\{u_{ij}^n, v_{ij}^n, w_{ij}^n\}$ of the linear scheme \eqref{step3-1}-\eqref{step3-3} exists. Let us consider the homogeneous system of \eqref{step3-1}-\eqref{step3-3} as follows
\begin{align}
   \notag  &\frac{1}{\tau_f} u^{n}_{ij} +\frac{1}{2} \psi_h^f\left((u_f)^{n-\frac{1}{2}}_{ij}, u^{n}_{ij}\right) - \frac{h_{f}^{2}}{4}\left[\psi_x^f\left((v_f)^{n-\frac{1}{2}}_{ij}, u^{n}_{ij}\right)+\psi_y^f\left((w_f)^{n-\frac{1}{2}}_{ij}, u^{n}_{ij}\right)\right]  \\ 
\label{eqt4.1}  &\quad = \frac{1}{2}\lambda(v^{n}_{ij} + w^{n}_{ij}), \quad (x_{i}, y_{j})\in \Omega_h^f ,\\
\label{eqt4.2} &v_{ij}^n = \delta_{xx}^f u_{ij}^n - \frac{h_{f}^2}{12} \delta_{xx}^f v_{ij}^n, \quad (x_{i}, y_{j})\in \Omega_h^f , \\
\label{eqt4.3} &w_{ij}^n = \delta_{yy}^f u_{ij}^n - \frac{h_{f}^2}{12} \delta_{yy}^f w_{ij}^n, \quad (x_{i}, y_{j})\in \Omega_h^f.
\end{align}
Taking the inner product of \eqref{eqt4.1} with $u^n$ and using Lemma \ref{lemma2.4}, we get
\begin{equation} \label{eqt4.4}
   \begin{split}
      \frac{1}{\tau_f}\|u^n\|^2  = \frac{\lambda}{2}\langle v^n+w^n, u^n \rangle.
   \end{split}
\end{equation}
Applying Lemma \ref{lemma2.6}, one obtains that
\begin{equation*}
   \begin{split}
      \frac{1}{\tau_f}\|u^n\|^2   +\frac{\lambda}{2}|u^n|_{1}^2 +\frac{\lambda h_{f}^2}{36}\Big(\|v^n\|^2 +\|w^n\|^2\Big)\leq 0.
   \end{split}
\end{equation*}
Then, it means that
\begin{equation*}
   \|u^n\|=0, \quad \|v^n\|=0, \quad \|w^n\|=0.
\end{equation*}
Thus, the homogeneous system \eqref{eqt4.1}-\eqref{eqt4.3} only has zeros solutions, which implies that $u^n,~v^n$ and $w^n$ are uniquely determined by \eqref{step3-1}-\eqref{step3-5}. 
\end{proof}

\subsection{Convergence}
In this section, we shall prove the convergence of the ST-TGCD scheme \eqref{eq3.12}-\eqref{step3-5}. The error bound for each individual step is independently derived.
\subsubsection{Error estimate for the step 1.}
      Firstly, we derive the convergence of coarse grid system \eqref{eq3.12}-\eqref{eq3.16}. Denote
\begin{equation*}
   \begin{split}
   &(e_c)_{pq}^{r}:=U_{pq}^{r}-(u_c)_{pq}^{r}, \quad (\rho_c)_{pq}^{r} :=V_{pq}^{r}-(v_c)_{pq}^{r}, \quad (\gamma_c)_{pq}^{r}:=W_{pq}^{r}-(w_c)_{pq}^{r}, \quad (x_p,y_q)\in \Omega_h^c,~t_r \in \Omega_\tau^c.\\
   \end{split}
\end{equation*}
\vskip 0.2mm
Subtracting \eqref{eq3.12}–\eqref{eq3.14} from \eqref{eq3.4}–\eqref{eq3.6} (note that we here take $\sigma=c$ and replace index $(x_a,y_b,t_d)$ with $(x_p,y_q,t_r)$ in \eqref{eq3.4}–\eqref{eq3.6}) yields the coarse-grid error equations
\begin{align}
 \notag &\delta_{t}^c (e_c)^{r-\frac{1}{2}}_{pq} + \psi_h^c\Big(U^{r-\frac{1}{2}}_{pq}, U^{r-\frac{1}{2}}_{pq}\Big) - \psi_h^c\Big((u_c)^{r-\frac{1}{2}}_{pq}, (u_c)^{r-\frac{1}{2}}_{pq}\Big) - \frac{h_{c}^{2}}{2}\left[\psi_x^c\Big(V^{r-\frac{1}{2}}_{pq}, U^{r-\frac{1}{2}}_{pq}\Big)\right.\\
 \notag &\quad \left.-~\psi_x^c\Big((v_c)^{r-\frac{1}{2}}_{pq}, (u_c)^{r-\frac{1}{2}}_{pq}\Big)+\psi_y^c\Big(W^{r-\frac{1}{2}}_{pq}, U^{r-\frac{1}{2}}_{pq}\Big)-\psi_y^c\Big((w_c)^{r-\frac{1}{2}}_{pq}, (u_c)^{r-\frac{1}{2}}_{pq}\Big)\right]  \\ 
 \label{equ4.5} &\quad = \lambda\Big((\rho_c)^{r-\frac{1}{2}}_{pq} + (\gamma_c)^{r-\frac{1}{2}}_{pq}\Big)+(P_c)^{r-\frac{1}{2}}_{pq}, \quad (x_{p}, y_{q})\in \Omega_h^c ,\quad t_r\in\Omega_\tau^c\setminus\{0\},\\
\label{eq4.2} &(\rho_c)_{pq}^r = \delta_{xx}^c (e_c)_{pq}^r - \frac{h_{c}^2}{12} \delta_{xx}^c(\rho_c)_{pq}^r+ (Q_c)_{pq}^r, \quad (x_{p}, y_{q})\in \Omega_h^c ,\quad t_r\in\Omega_\tau^c, \\
\label{eq4.3} &(\gamma_c)_{pq}^r = \delta_{yy}^c (e_c)_{pq}^r - \frac{h_{c}^2}{12} \delta_{yy}^c(\gamma_c)_{pq}^r +(R_c)_{pq}^r, \quad (x_{p}, y_{q})\in \Omega_h^c ,\quad t_r\in\Omega_\tau^c.
\end{align}

\begin{theorem} \label{Th4.2}
   Suppose that $\{(u_c)_{pq}^r,(v_c)_{pq}^r,(w_c)_{pq}^r~|~(x_p,y_q, t_r)\in \Omega_h^c\times \Omega_\tau^c\}$ is the solution of the coarse grid system \eqref{eq3.12}-\eqref{eq3.16}, $\{u(x,y,t),v(x,y,t), w(x,y,t)~|~ (x,y,t)\in \Omega\times[0,T]\}$ is the solution of \eqref{eq3.1}-\eqref{eq3.2} with \eqref{initial_condition}-\eqref{boundary_condition}. There exists a positive constant $\hat{c}$ that is independent of $h_c$ and $\tau_c$, when $\tau_c \leq \hat{c}$, then one can obtain the following error estimate
   \begin{equation} \label{equ4.8}
       \|e_c^r\|_{c} \leq \widetilde{C}(\tau_c^2+h_c^4), \qquad 1\leq r \leq N^c.
   \end{equation}
\end{theorem}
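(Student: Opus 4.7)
The plan is to run a discrete energy argument on the coarse-grid error system (4.5)--(4.7) and close it via the discrete Grönwall Lemma 2.7, with an induction on $r$ to secure the boundedness of $u_c^{r-1}$ needed to handle the nonlinear terms. First I would take the inner product $\langle\cdot,\cdot\rangle_c$ of (4.5) with $2\tau_c e_c^{r-1/2}$, which by the usual identity $2\tau_c\langle\delta_t^c e_c^{r-1/2},e_c^{r-1/2}\rangle_c=\|e_c^r\|_c^2-\|e_c^{r-1}\|_c^2$ produces the telescoping we want on the left. The diffusive right-hand side $\lambda\langle \rho_c^{r-1/2}+\gamma_c^{r-1/2},e_c^{r-1/2}\rangle_c$ is then rewritten with Lemma 2.6 (applied to (4.6)--(4.7) with $u=e_c^{r-1/2}$, $S=Q_c^{r-1/2}$, $T=R_c^{r-1/2}$), yielding the coercive contribution $-\lambda|e_c^{r-1/2}|_{1,c}^2-\tfrac{\lambda h_c^2}{18}\bigl(\|\rho_c^{r-1/2}\|_c^2+\|\gamma_c^{r-1/2}\|_c^2\bigr)$ moved to the left, plus remainders controlled by the $O(h_c^4)$ bounds on $Q_c,R_c$.

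For each nonlinear difference I would exploit bilinearity, writing
\[
\psi_h^c(U^{r-1/2},U^{r-1/2})-\psi_h^c(u_c^{r-1/2},u_c^{r-1/2})=\psi_h^c(e_c^{r-1/2},U^{r-1/2})+\psi_h^c(u_c^{r-1/2},e_c^{r-1/2}),
\]
and the analogous decompositions $\psi_x^c(V,U)-\psi_x^c(v_c,u_c)=\psi_x^c(\rho_c,U)+\psi_x^c(v_c,e_c)$ and likewise for $\psi_y^c$. Paired against $e_c^{r-1/2}$, Lemma 2.4 annihilates every piece whose \emph{second} slot is $e_c^{r-1/2}$, leaving only terms in which an error quantity ($e_c$, $\rho_c$, or $\gamma_c$) sits in the first slot and an exact-solution quantity sits in the second. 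Using the definitions in (2.7), Lemma 2.2, and the $L^\infty$ boundedness of $U,\widehat{\delta}_h^c U,\widehat{\delta}_x^c U,\widehat{\delta}_y^c U$ coming from smoothness of the exact solution, the surviving $\psi_h^c$ contribution is bounded by $\widetilde{C}\|e_c^{r-1/2}\|_c^2+\tfrac{\lambda}{4}|e_c^{r-1/2}|_{1,c}^2$ via Cauchy--Schwarz and Young. The $\psi_x^c(\rho_c,U)$ and $\psi_y^c(\gamma_c,U)$ pieces carry the extra $h_c^2/2$ prefactor and, after Young's inequality with a small weight, are absorbed by the $-\tfrac{\lambda h_c^2}{18}\|\rho_c^{r-1/2}\|_c^2$, $-\tfrac{\lambda h_c^2}{18}\|\gamma_c^{r-1/2}\|_c^2$ and $-\lambda|e_c^{r-1/2}|_{1,c}^2$ terms supplied by Lemma 2.6.

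Combining these estimates with the truncation bound $\|P_c^{r-1/2}\|_c=O(\tau_c^2+h_c^4)$ and $\|e_c^{r-1/2}\|_c^2\leq\tfrac12(\|e_c^r\|_c^2+\|e_c^{r-1}\|_c^2)$, I expect a discrete recurrence of the form
\[
\|e_c^r\|_c^2\leq\|e_c^{r-1}\|_c^2+\widetilde{C}\tau_c\bigl(\|e_c^r\|_c^2+\|e_c^{r-1}\|_c^2\bigr)+\widetilde{C}\tau_c(\tau_c^2+h_c^4)^2.
\]
Choosing $\hat{c}=1/(2\widetilde{C})$ lets the $\|e_c^r\|_c^2$ on the right be absorbed when $\tau_c\leq\hat{c}$, and Lemma 2.7 then delivers (4.8) after summing from $0$ to $r$ and using $\|e_c^0\|_c=0$.

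The main obstacle is the $L^\infty$-type control on $u_c^{r-1/2}$ and $v_c^{r-1/2},w_c^{r-1/2}$ that the nonlinear pieces require. This is handled by induction on $r$: assuming (4.8) holds through index $r-1$ gives $\|e_c^{r-1}\|_c\leq\widetilde{C}(\tau_c^2+h_c^4)$, and the inverse estimate (Lemma 2.3) combined with the smoothness of $U$ yields $\|u_c^{r-1}\|_\infty\leq\widetilde{C}$; the corresponding bound on $v_c^{r-1}$ and $w_c^{r-1}$ then follows from (3.13)--(3.14). A mild compatibility condition of the form $\tau_c^2+h_c^4=o(h_c)$ is implicit here, which is compatible with shrinking $\hat{c}$ if necessary.
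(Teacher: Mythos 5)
Your main energy argument is essentially the intended one: the paper itself gives no details for Theorem \ref{Th4.2} (it defers to the appendix of the 1D paper \cite{wangxp}), and the route you sketch --- test the error equation with $e_c^{r-1/2}$, split the nonlinear differences bilinearly so that Lemma \ref{lemma2.4} kills every piece with the error in the second slot, treat the diffusive term with Lemma \ref{lemma2.6}, absorb the $\frac{h_c^2}{2}\psi_x^c(\rho_c,U)$, $\frac{h_c^2}{2}\psi_y^c(\gamma_c,U)$ remainders into $-\frac{\lambda h_c^2}{18}\big(\|\rho_c\|_c^2+\|\gamma_c\|_c^2\big)$, and close with Lemma \ref{lemma2.7} --- is exactly the 2D analogue of that proof and of the paper's own Theorem \ref{Th4.4}. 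One small technical remark: absorbing the $h_c^2\|\rho_c\|\,\|\widehat{\delta}_x^c e_c\|$-type terms into $-\lambda|e_c|_{1,c}^2$ requires a smallness condition on $h_c$; it is cleaner to use the inverse estimate of Lemma \ref{lemma2.3} ($\|\widehat{\delta}_x^c e_c\|_c\le \tfrac{2}{h_c}\|e_c\|_c$) so that these terms become $\widetilde{C}\|e_c\|_c^2$ with no mesh restriction, which is how the paper handles the corresponding terms in Theorem \ref{Th4.4}.

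Your last paragraph, however, is both unnecessary and incorrect as stated. By your own decompositions, every term containing $u_c$, $v_c$ or $w_c$ appears with $e_c^{r-1/2}$ in the second slot and is annihilated exactly by Lemma \ref{lemma2.4}; the surviving terms involve only exact-solution quantities, bounded by $c_0$. So no $L^\infty$ control of the numerical solution, no induction on $r$, and no coupling between $\tau_c$ and $h_c$ is needed. Moreover, the fix you propose would not work if it were needed: Lemma \ref{lemma2.3} is an inverse estimate, not a discrete embedding yielding $\|u_c\|_\infty\le\widetilde{C}$ (in 2D the relevant embedding fails, as the paper notes in its stability discussion), and a condition of the form $\tau_c^2=o(h_c)$ is a genuine space--time coupling that cannot be enforced by shrinking a constant $\hat{c}$ that is independent of $h_c$; it would turn the unconditional estimate \eqref{equ4.8} into a conditional one, i.e., prove a weaker statement than the theorem. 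Delete that paragraph and the argument stands, with the only restriction $\tau_c\le\hat{c}$ coming from moving $\widetilde{C}\tau_c\|e_c^r\|_c^2$ to the left before applying Grönwall.
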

\begin{proof}
   The proof closely parallels that of the 1D case, details can be found in the Appendix of \cite{wangxp}.
\end{proof}

\subsubsection{Error estimate for the step 2.}
In what follows, we will derive the convergence of the interpolation scheme \eqref{eq3.17}-\eqref{eq3.18}. Denote
\begin{equation*}
   \begin{split}
   (e_f)_{ij}^{n}:=U_{ij}^{n}-(u_f)_{ij}^{n}, \quad (\rho_f)_{ij}^{n}:=V_{ij}^{n}-(v_f)_{ij}^{n}, \quad (\gamma_f)_{ij}^{n}:=W_{ij}^{n}-(w_f)_{ij}^{n}, \quad (x_i,y_j)\in \Omega_h^f,~t_n \in \Omega_\tau^f.
   \end{split}
\end{equation*}
\begin{theorem}
   Suppose $\{U_{ij}^n~|~(x_i,y_j)\in \Omega_h^f,~ t_n \in \Omega_\tau^f\}$ is the solution of \eqref{main_equation}-\eqref{boundary_condition}, $\{(u_f)_{ij}^n~|~(x_i,y_j)\in \Omega_h^f,~ t_n \in \Omega_\tau^f\}$ is the solution of \eqref{eq3.17}-\eqref{eq3.18}. If $u(x,y,t) \in C^{4,2}(\bar{\Omega}\times[0,T])$, then it holds that
   \begin{equation} \label{eqt4.9}
       \|e_f^n\| \leq \widetilde{C}(\tau_f^2+h_f^4), \qquad 1\leq n \leq N^f.
   \end{equation} 
\end{theorem}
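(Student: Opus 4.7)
The natural strategy is to split $(e_f)_{ij}^n$ into an interpolation error of the \emph{exact} solution plus the propagation of the coarse-grid error through the same interpolation operators. Let $\tilde U_{ij}^n$ denote the grid function produced by applying the two-stage procedure \eqref{eq3.17}--\eqref{eq3.18} to the exact values $U_{pk_h,qk_h}^{rk_\tau}$ on the coarse nodes. Then
\begin{equation*}
U_{ij}^n-(u_f)_{ij}^n \;=\; \bigl(U_{ij}^n-\tilde U_{ij}^n\bigr)+\bigl(\tilde U_{ij}^n-(u_f)_{ij}^n\bigr),
\end{equation*}
and I would bound the two parts separately.

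For the first part, I would invoke the classical pointwise remainder estimates for Lagrange interpolation: the linear-in-time interpolant over an interval of length $\tau_c$ has error $O(\tau_c^2\|u_{tt}\|_\infty)$, while the tensor-product cubic spatial interpolant on a window of width $3h_c$ has error $O(h_c^4(\|\partial_x^4 u\|_\infty+\|\partial_y^4 u\|_\infty))$. The regularity $u\in C^{4,2}(\bar\Omega\times[0,T])$ assumed in the statement is exactly what these classical bounds require. Taking the discrete $L^2$ norm on $\Omega_h^f$ (whose total measure is $L_1L_2$) produces $\|U^n-\tilde U^n\|\le\widetilde C(\tau_c^2+h_c^4)$.

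For the second part, linearity of the interpolation implies that $\tilde U^n-(u_f)^n$ is obtained by feeding the coarse-grid error $(e_c)_{pq}^r$ into the same linear-in-time, cubic-in-space procedure. Since the linear temporal weights lie in $[0,1]$ and the cubic Lagrange weights on four equally spaced nodes are uniformly bounded by an absolute Lebesgue constant, I would write $(\tilde U-u_f)_{ij}^n$ as a bounded combination of at most $32$ values of $(e_c)_{pq}^r$, apply Cauchy--Schwarz pointwise, and then sum over the fine grid. Because there are $k_h^2$ fine nodes per coarse cell and $h_f^2 k_h^2=h_c^2$, the powers of $k_h$ cancel in the norm conversion and I obtain a discrete stability bound of the form $\|\tilde U^n-(u_f)^n\|\le\widetilde C\max_{0\le r\le N^c}\|(e_c)^r\|_c$, at which point Theorem \ref{Th4.2} delivers $\|\tilde U^n-(u_f)^n\|\le\widetilde C(\tau_c^2+h_c^4)$.

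Summing the two contributions and using that $k_\tau,k_h$ are fixed integers with $\tau_c=k_\tau\tau_f$ and $h_c=k_h h_f$, I absorb $k_\tau^2$ and $k_h^4$ into $\widetilde C$ and arrive at the claimed $\|e_f^n\|\le\widetilde C(\tau_f^2+h_f^4)$. The main technical obstacle will be the discrete interpolation-stability estimate for the second part: one must track carefully how many fine nodes receive weights from each coarse node so that the $\ell^2$-norm conversion between $\Omega_h^c$ and $\Omega_h^f$ produces a constant independent of the mesh sizes; the rest of the argument reduces to textbook Lagrange remainder formulas combined with the already-established Theorem \ref{Th4.2}.
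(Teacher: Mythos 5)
Your proposal is correct and follows essentially the same route as the paper: the paper likewise writes $e_f$ at the new fine nodes as the linear-in-time and cubic-in-space Lagrange interpolation of the coarse-grid error plus the classical interpolation remainders of the exact solution, bounds the weights, performs the coarse-to-fine $\ell^2$ conversion via a Cauchy--Schwarz argument, and then invokes Theorem \ref{Th4.2} while absorbing the fixed ratios $k_\tau,k_h$ into $\widetilde{C}$. Your explicit two-term decomposition through $\tilde U$ is only a bookkeeping variant of the paper's single error identity, so no substantive difference or gap remains.
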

\begin{proof}
 Assume that $u(x,y,t) \in C^{4,4,2}([0,L_1]\times[0,L_2]\times[0,T])$. For $1\leq r \leq N^c$ and $1\leq s \leq k_\tau-1$, combining with the error formula of Lagrange interpolation, one has
 \begin{equation} \label{equ4.10}
   \begin{split}
     U_{pk_h,qk_h}^{(r-1)k_\tau+s}=&(1-\frac{s}{k_\tau})U_{pk_h,qk_h}^{(r-1)k_\tau}+\frac{s}{k_\tau}U_{pk_h,qk_h}^{rk_\tau}\\
     &+\frac{u_{tt}(\xi )}{2}(t_{(r-1)k_\tau+s}-t_{(r-1)k_\tau})(t_{(r-1)k_\tau+s}-t_{rk_\tau}), \quad \xi \in (t_{(r-1)k_\tau},t_{rk_\tau}).
   \end{split}
 \end{equation}
  Subtracting (\ref{eq3.17}) from (\ref{equ4.10}), then we have
  $$ (e_f)_{pk_h,qk_h}^{(r-1)k_\tau+s}=(1-\frac{s}{k_\tau})(e_c)^{r-1}_{pq}+\frac{s}{k_\tau}(e_c)_{pq}^{r}+\frac{u_{tt}(\xi )}{2}(t_{(r-1)k_\tau+s}-t_{(r-1)k_\tau})(t_{(r-1)k_\tau+s}-t_{rk_\tau}). $$
  And using triangle inequality and \eqref{equ4.8}, we can obtain
  \begin{equation*}
   \begin{split}
     \left\| e_f^{(r-1)k_\tau+s}\right\|_{c} \leq& (1-\frac{s}{k_\tau})\left\| e_c^{r-1}\right\|_{c}+\frac{s}{k_\tau}\left\| e_c^{r}\right\|_{c}+\frac{\widetilde{C}}{2}\tau_c^2\\
     \leq & \widetilde{C}(\tau_c^2+h_c^4), \quad 1\leq r \leq N^c, \quad 1\leq s \leq k_\tau-1,
   \end{split}
  \end{equation*}
  in which $\left\| e_f^{(r-1)k_\tau+s}\right\|_{c}=\sqrt{h_c^2\sum_{p=1}^{M_1^c}\sum_{q=1}^{M_2^c} [(e_f)_{pk_h,qk_h}^{(r-1)k_\tau+s}]^2}$.
  Together with \eqref{equ4.8}. Thus, we have
  \begin{equation} \label{equ4.11}
   \| e_f^n\|_{c} \leq \widetilde{C}(\tau_c^2+h_c^4), \quad 0\leq n \leq N^f.
  \end{equation}
According to the 2D Lagrange cubic interpolation residual formula, we have
\begin{equation*} 
   \begin{split}
     &(e_f)_{pk_h+l,qk_h+m}^n = (U-u_f)_{pk_h+l,qk_h+m}^n = \sum_{i=0}^3\sum_{j=0}^3 (e_f)_{(p+i)k_h,(q+j)k_h}^{n} {\mu_i(x_{pk_h+l})} {\omega_j(y_{qk_h+m})}, \\
    & \quad +\frac{\varpi_x(x_{pk_h+l})}{4!}u^{(4,0)}(\alpha, y_{qk_h+m})+\frac{\varpi_y(y_{qk_h+m})}{4!}u^{(0,4)}(x_{pk_h+l}, \beta)- \frac{\varpi_{x}(x_{pk_h+l})\varpi_y(y_{qk_h+m})}{4!4!}u^{(4,4)}(\alpha', \beta'),
   \end{split}
\end{equation*}
where $\alpha, \alpha' \in (x_{pk_h},x_{(p+3)k_h});~\beta, \beta' \in (y_{qk_h},y_{(q+3)k_h})$; $u^{(k,l)}:=\frac{\partial^{k+l}u}{\partial^k x \partial^l y}$; $\varpi_x(x)$ and $\varpi_y(y)$ are defined as
$$\varpi_x(x):=\prod_{s=0}^3 (x-x_{(p+s)k_h}), \quad \varpi_y(y):=\prod_{s=0}^3 (y-y_{(q+s)k_h}).$$
Recall the definition of $\mu_i(x), \omega_j(y)$, for any $1 \leq l,m \leq k_h-1$, it holds that
\begin{equation} \label{equ4.11-1}
   |\mu_i(x_{pk_h+l})|<1, \quad |\omega_j(y_{qk_h+m})|<1.
\end{equation}
And there exists the estimates for $\varpi_{x}(x_{pk_h+l})$ and $\varpi_y(y_{qk_h+m})$, i.e.,
\begin{equation} \label{equ4.11-2}
   \varpi_{x}(x_{pk_h+l})\leq 6h_c^4, \quad \varpi_y(y_{qk_h+m})\leq 6h_c^4, \quad 1\leq l,m \leq k_h-1.
\end{equation}
Moreover, note that $u^{(4,0)}(\alpha, y_{qk_h+m}), u^{(0,4)}(x_{pk_h+l}, \beta)$ and $u^{(4,4)}(\alpha', \beta')$ are bounded by $\widetilde{C}$, and combine \eqref{equ4.11-1} with \eqref{equ4.11-2}, then, for any $0\leq p \leq M_1^c-1,\,0\leq q \leq M_2^c-1,\,0 \leq n \leq N^f$, we have
\begin{equation*} 
     \begin{split}
     |(e_f)_{pk_h+l,qk_h+m}^n| &\leq  \sum_{i=0}^3\sum_{j=0}^3 |(e_f)_{(p+i)k_h,(q+j)k_h}^{n}|+\frac{6h_c^4}{4!}\widetilde{C} +\frac{6h_c^4}{4!}\widetilde{C}+ \frac{36h_c^8}{4!4!}\widetilde{C}\\
     &\leq \sum_{i=0}^3\sum_{j=0}^3 |(e_f)_{(p+i)k_h,(q+j)k_h}^{n}|+ \widetilde{C}h_c^4, \quad 1 \leq l,m \leq k_h-1.
   \end{split}
\end{equation*}
Using Cauchy-Schwarz inequality in sequence form, namely, $(\sum_{i=1}^{n}a_ib_i)^2\leq (\sum_{i=1}^{n}a_i^2)(\sum_{i=1}^{n}b_i^2)$, we have
\begin{equation}\label{equ4.12}
\begin{split}
   |(e_f)_{pk_h+l,qk_h+m}^n|^2 \leq 17\sum_{i=0}^3\sum_{j=0}^3 |(e_f)_{(p+i)k_h,(q+j)k_h}^{n}|^2 + 17\widetilde{C}^2h_c^8.
\end{split}
\end{equation}
Meanwhile, we also have
\begin{equation} \label{equ4.13}
     \begin{split}
    & |(e_f)_{pk_h,qk_h+m}^n| \leq \frac{6h_c^4}{4!}\widetilde{C} \leq \widetilde{C}h_c^4, \quad 1 \leq m \leq k_h-1,\\
   &|(e_f)_{pk_h+l,qk_h}^n| \leq \frac{6h_c^4}{4!}\widetilde{C} \leq \widetilde{C}h_c^4, \quad 1 \leq l \leq k_h-1.
   \end{split}
\end{equation}

Thus, by using \eqref{equ4.11}, \eqref{equ4.12} and \eqref{equ4.13}, we obtain the convergence order in $L^2$-norm for the second step as follows
\begin{equation} \label{eq4.10}
   \begin{split}
       &\|e_f^n\|^2=h_f^2\sum_{i=1}^{M_1^f}\sum_{j=1}^{M_2^f}|(e_f)_{ij}^n|^2=h_f^2\sum_{p=0}^{M_1^c-1}\sum_{q=0}^{M_2^c-1}\sum_{l=0}^{k_h-1}\sum_{m=0}^{k_h-1}|(e_f)_{pk_h+l,qk_h+m}^n|^2\\
       &=h_f^2\sum_{p=0}^{M_1^c-1}\sum_{q=0}^{M_2^c-1}\left(|(e_f)_{pk_h,qk_h}^n|^2+\sum_{l=1}^{k_h-1}\sum_{m=1}^{k_h-1}|(e_f)_{pk_h+l,qk_h+m}^n|^2+\sum_{l=1}^{k_h-1}|(e_f)_{pk_h+l,qk_h}^n|^2+\sum_{m=1}^{k_h-1}|(e_f)_{pk_h,qk_h+m}^n|^2\right)\\
       & \leq \frac{1}{k_h^2}\|e_f^n\|_c^2+\frac{17\cdot 16(k_h-1)^2}{k_h^2}\|e_f^n\|_c^2+17L_1L_2\widetilde{C}^2(k_h-1)^2h_c^8+\frac{L_1L_2(k_h-1)}{k_h^2}\widetilde{C}^2h_c^8+\frac{L_1L_2(k_h-1)}{k_h^2}\widetilde{C}^2h_c^8\\
       & \leq \widetilde{C}(\tau_c^2+h_c^4)^2+\widetilde{C}h_c^8 \leq \widetilde{C}\max(k_\tau^4,k_h^8)(\tau_f^2+h_f^4)^2+\widetilde{C}k_h^8h_f^8 \leq \widetilde{C} (\tau_f^2+h_f^4)^2, \quad 0\leq n \leq N^f,
   \end{split}
\end{equation}
where the equations hold $\|e_f^n\|_c^2=h_c^2\sum_{p=0}^{M_1^c-1}\sum_{q=0}^{M_2^c-1}|(e_f)_{(p+i)k_h,(q+j)k_h}^{n}|^2~ (0\leq i,j \leq 3)$ by the help of periodic boundary. And we used the fact that $h_c=k_hh_f$ and $\tau_c=k_\tau\tau_f$ in \eqref{eq4.10}.
\end{proof}

We take $\sigma=f$ and replace index $(x_a,y_b,t_d)$ with $(x_i,y_j,t_n)$ in \eqref{eq3.5}-\eqref{eq3.6}. And then, subtracting \eqref{equ3.19}-\eqref{equ3.20} from \eqref{eq3.5}-\eqref{eq3.6}, respectively. We have
\begin{align}
   \label{equ4.15} &(\rho_f)_{ij}^n = \delta_{xx}^f (e_f)_{ij}^n - \frac{h_f^2}{12}\delta_{xx}^f (\rho_f)_{ij}^n+(Q_f)_{ij}^n, \quad (x_i,y_j)\in\Omega_h^f,~t_n\in\Omega_\tau^f,\\
   \label{equ4.16} &(\gamma_f)_{ij}^n = \delta_{yy}^f (e_f)_{ij}^n - \frac{h_f^2}{12}\delta_{yy}^f (\gamma_f)_{ij}^n+(R_f)_{ij}^n, \quad (x_i,y_j)\in\Omega_h^f,~t_n\in\Omega_\tau^f.
\end{align}
Rearranging \eqref{equ4.15}, we have
\begin{equation*}
   (\rho_f)_{ij}^n=\left(I+\frac{h_f^2}{12}\delta_{xx}^f\right)^{-1}\delta_{xx}^f (e_f)_{ij}^n+\left(I+\frac{h_f^2}{12}\delta_{xx}^f\right)^{-1}(Q_f)_{ij}^n,
\end{equation*}
where $I$ is the identity operator. Define $A:=I+\frac{h_f^2}{12}\delta_{xx}^f$, then using discrete Fourier transform to yield its eigenvalues $(\lambda_A)_j=1-\frac{1}{3}\sin^2(\frac{\pi j}{M_1^f}) $, see [Subsection 3.1, \citenum{gray2006toeplitz}]. Thus, we have the spectral norm $\|(I+\frac{h_f^2}{12}\delta_{xx}^f)^{-1}\|_{\text{spec}}\leq3/2$. Combining with Lemma \ref{lemma2.3}, then we obtain
\begin{equation} \label{equ4.17}
   \begin{split}
      \|\rho_f^n\| &\leq \|(I+\frac{h_f^2}{12}\delta_{xx}^f)^{-1}\|_{\text{spec}}\|\delta_{xx}^f e_f^n\|+\|(I+\frac{h_f^2}{12}\delta_{xx}^f)^{-1}\|_{\text{spec}}\|Q_f^n\|\\
      &\leq \frac{3}{2}\|\delta_{xx}^f e_f^n\|+\frac{3}{2}\|Q_f^n\| \leq \frac{6}{h_f^2}\|e_f^n\|+ \frac{3}{2}\|Q_f^n\|, \quad 0\leq n \leq N^f.
   \end{split}
\end{equation}
As the same way, we also have
\begin{equation} \label{equ4.18}
   \|\gamma_f^n\| \leq \frac{6}{h_f^2}\|e_f^n\|+ \frac{3}{2}\|R_f^n\|, \quad 0\leq n \leq N^f.
\end{equation}

 \subsubsection{Error estimate for the step 3.}
  \vskip 0.2mm
  Finally, we establish the convergence of the third-step scheme (\ref{step3-1})–(\ref{step3-5}). To this end, we also first introduce the errors and the requisite constant as follows
\begin{equation*}
   \begin{aligned}
     &e_{ij}^n:=U_{ij}^n-u_{ij}^n, \quad \rho_{ij}^n :=V_{ij}^n-v_{ij}^n, \quad \gamma_{ij}^n :=W_{ij}^n-w_{ij}^n, \qquad (x_i,y_j)\in \Omega_h^f,\quad t_n \in \Omega_\tau^f,\\
     &c_0:=\max \limits_{(x,y,t)\in \bar{\Omega}\times[0,T]}\left\{ |u(x,y,t)|,|u_{x}(x,y,t)+u_{y}(x,y,t)|,|u_{xx}(x,y,t)|,|u_{yy}(x,y,t)|\right\}.
   \end{aligned}
 \end{equation*}
  \vskip 0.2mm
  Subtracting \eqref{step3-1}-\eqref{step3-5} from \eqref{eq3.4}-\eqref{eq3.6} respectively. Note that taking $\sigma=f$ and replacing index $(x_a,y_b,t_d)$ with $(x_i,y_j,t_n)$ in \eqref{eq3.4}-\eqref{eq3.6}. Then the error system as follows
 \begin{align}
 \notag &\delta_{t}^f e^{n-\frac{1}{2}}_{ij} + \psi_h^f\Big(U^{n-\frac{1}{2}}_{ij}, U^{n-\frac{1}{2}}_{ij}\Big) - \psi_h^f\Big((u_f)^{n-\frac{1}{2}}_{ij}, u^{n-\frac{1}{2}}_{ij}\Big) - \frac{h_{f}^{2}}{2}\left[\psi_x^f\Big(V^{n-\frac{1}{2}}_{ij}, U^{n-\frac{1}{2}}_{ij}\Big)\right.\\
 \notag &\quad \left.-~\psi_x^f\Big((v_f)^{n-\frac{1}{2}}_{ij}, u^{n-\frac{1}{2}}_{ij}\Big)+\psi_y^f\Big(W^{n-\frac{1}{2}}_{ij}, U^{n-\frac{1}{2}}_{ij}\Big)-\psi_y^f\Big((w_f)^{n-\frac{1}{2}}_{ij}, u^{n-\frac{1}{2}}_{ij}\Big)\right]  \\ 
 \label{equ4.19} &\quad = \lambda\Big(\rho^{n-\frac{1}{2}}_{ij} + \gamma^{n-\frac{1}{2}}_{ij}\Big)+(P_f)_{ij}^n, \quad (x_{i}, y_{j})\in \Omega_h^f ,\quad t_n\in\Omega_\tau^f\setminus\{0\},\\
\label{equ4.20} &\rho_{ij}^n = \delta_{xx}^f e_{ij}^n - \frac{h_{f}^2}{12} \delta_{xx}^f \rho_{ij}^n+(Q_f)_{ij}^n, \quad (x_{i}, y_{j})\in \Omega_h^f ,\quad t_n\in\Omega_\tau^f, \\
\label{equ4.21} &\gamma_{ij}^n = \delta_{yy}^f e_{ij}^n - \frac{h_{f}^2}{12} \delta_{yy}^f \gamma_{ij}^n+(R_f)_{ij}^n, \quad (x_{i}, y_{j})\in \Omega_h^f ,\quad t_n\in\Omega_\tau^f.
\end{align}
  \begin{theorem} \label{Th4.4}
   Suppose that $\{ u_{ij}^{n},v_{ij}^{n}, w_{ij}^{n}~|~(x_i,y_j,t_n)\in \Omega_h^f\times\Omega_\tau^f\}$ is the solution of fine-grid linear system \eqref{step3-1}-\eqref{step3-5} and $\{ u(x,y,t),v(x,y,t), w(x,y,t) \}$ is the solution of \eqref{eq3.1}-\eqref{eq3.2} and \eqref{initial_condition}-\eqref{boundary_condition}. When  $\tau_f\leq 2/3c_1$, where $c_1$ is a positive constant defined in \eqref{equ4.25}. Then we can obtain
   \begin{equation} \label{equ4.22}
     \|e^{n}\| \leq \widetilde{C}(\tau_f^2+h_f^4), \qquad 1\leq n \leq N^f.
   \end{equation}
  \end{theorem}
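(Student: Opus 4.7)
The strategy is an energy estimate on the fine-grid error system \eqref{equ4.19}--\eqref{equ4.21}, closed by the discrete Gr\"onwall inequality (Lemma \ref{lemma2.7}). First I take the inner product of \eqref{equ4.19} with $e^{n-\frac{1}{2}}$. The time derivative gives $\frac{1}{2\tau_f}(\|e^n\|^2-\|e^{n-1}\|^2)$. For the viscous contribution $\lambda\langle\rho^{n-\frac{1}{2}}+\gamma^{n-\frac{1}{2}},e^{n-\frac{1}{2}}\rangle$, I apply Lemma \ref{lemma2.6} with the roles of $S,T$ played by the time averages of $(Q_f)^n,(R_f)^n$ arising in \eqref{equ4.20}--\eqref{equ4.21}; this produces the crucial dissipation $-\lambda|e^{n-\frac{1}{2}}|_1^2$ together with $O(h_f^4)$ remainders that will be absorbed by Cauchy--Schwarz and Young.

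The central algebraic step is the bilinear splitting
\begin{equation*}
\psi_h^f(U,U)-\psi_h^f(u_f,u)=\psi_h^f(e_f,U)+\psi_h^f(u_f,e),
\end{equation*}
and identical identities for $\psi_x^f(V,U)-\psi_x^f(v_f,u)=\psi_x^f(\rho_f,U)+\psi_x^f(v_f,e)$ and $\psi_y^f(W,U)-\psi_y^f(w_f,u)=\psi_y^f(\gamma_f,U)+\psi_y^f(w_f,e)$, with every quantity evaluated at the half step $n-\frac{1}{2}$. After pairing with $e^{n-\frac{1}{2}}$, the ``diagonal'' pieces $\psi_h^f(u_f,e),\psi_x^f(v_f,e),\psi_y^f(w_f,e)$ each contribute zero by Lemma \ref{lemma2.4}, leaving only the cross terms $\langle\psi_h^f(e_f,U),e\rangle$ and $\frac{h_f^2}{2}\langle\psi_x^f(\rho_f,U),e\rangle$, $\frac{h_f^2}{2}\langle\psi_y^f(\gamma_f,U),e\rangle$.

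Bounding these cross terms is the main technical step. A naive use of the inverse estimate via \eqref{equ4.17}--\eqref{equ4.18} only gives $\|\rho_f^n\|,\|\gamma_f^n\|=O(\tau_f^2/h_f^2+h_f^2)$, which would lose a power of $h_f$. To regain it I use summation by parts (Lemma \ref{lem2.1}) to move $\widehat{\delta}_x^f$ off the product inside $\psi_x^f$:
\begin{equation*}
\langle\widehat{\delta}_x^f(\rho_f U),e\rangle=-\langle\rho_f U,\widehat{\delta}_x^f e\rangle,
\end{equation*}
so that with $\|U\|_\infty$ and $\|\widehat{\delta}_x^f U\|_\infty$ uniformly bounded, together with Lemma \ref{lem2.2}, I obtain $|\langle\psi_x^f(\rho_f,U),e\rangle|\leq\widetilde{C}\|\rho_f\|(\|e\|+|e|_1)$. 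Multiplying by $h_f^2/2$ and combining \eqref{equ4.17} with \eqref{eqt4.9} yields $h_f^2\|\rho_f^n\|\leq\widetilde{C}(\tau_f^2+h_f^4)$, so this cross term is controlled by $\widetilde{C}(\tau_f^2+h_f^4)(\|e\|+|e|_1)$. The $\gamma_f$ contribution is treated identically, and $\langle\psi_h^f(e_f,U),e\rangle$ is handled directly by \eqref{eqt4.9} together with Lemma \ref{lem2.2} applied to $\widehat{\delta}_h^f e$. Finally, the truncation term is immediate since $\|(P_f)^{n-\frac{1}{2}}\|=O(\tau_f^2+h_f^4)$.

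Assembling everything and using Young's inequality with a parameter small enough to absorb every $|e|_1$ factor into the dissipation $\lambda|e^{n-\frac{1}{2}}|_1^2$, I arrive at a recursion of the form
\begin{equation*}
\|e^n\|^2-\|e^{n-1}\|^2\leq\widetilde{C}\tau_f(\tau_f^2+h_f^4)^2+c_1\tau_f\bigl(\|e^n\|^2+\|e^{n-1}\|^2\bigr),
\end{equation*}
where $c_1$ collects all the non-dissipative quadratic constants; this is precisely the $c_1$ that appears in the hypothesis $\tau_f\leq 2/(3c_1)$, which lets me move $\|e^n\|^2$ to the left with a strictly positive prefactor. A standard telescoping argument and Lemma \ref{lemma2.7} then deliver \eqref{equ4.22}. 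The most delicate point is the interplay between the prefactor $h_f^2/2$ built into the compact discretization in front of $\psi_x^f,\psi_y^f$ and the $O(h_f^{-2})$ blow-up of $\|\rho_f\|,\|\gamma_f\|$ inherited from the second step: without summation by parts (in place of a direct inverse estimate) the argument would not reproduce the full fourth-order spatial rate, so the compact structure of the scheme is essential in closing the estimate.
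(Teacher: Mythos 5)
Your proposal is correct and follows essentially the same route as the paper: an energy estimate with $e^{n-\frac12}$, the bilinear splitting whose ``diagonal'' parts vanish by Lemma \ref{lemma2.4}, cross terms $\langle\psi_h^f(e_f,U),e\rangle$, $\frac{h_f^2}{2}\langle\psi_x^f(\rho_f,U),e\rangle$, $\frac{h_f^2}{2}\langle\psi_y^f(\gamma_f,U),e\rangle$ controlled by combining the $h_f^2$ prefactor with \eqref{equ4.17}--\eqref{equ4.18} and \eqref{eqt4.9}, summation by parts plus absorption of $|e^{n-\frac12}|_1$ into the dissipation supplied by Lemma \ref{lemma2.6}, and finally the discrete Gr\"onwall inequality under $c_1\tau_f<2/3$. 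The only difference (writing the splitting as $\psi(e_f,U)+\psi(u_f,e)$ rather than the paper's $\psi(U,e)+\psi(e_f,U)-\psi(e_f,e)$) is cosmetic, since both reduce to the same surviving cross terms.
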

  \begin{proof}
 Taking an inner product of (\ref{equ4.19}) with $e^{n-\frac{1}{2}}$, we have
 \begin{equation} \label{equ4.23}
   \begin{split}
     &\left\langle \delta_t^f e^{n-\frac{1}{2}}~,~e^{n-\frac{1}{2}} \right\rangle+\left\langle \psi_h^f(U^{n-\frac{1}{2}},U^{n-\frac{1}{2}})-\psi_h^f(u_f^{n-\frac{1}{2}},u^{n-\frac{1}{2}})~,~e^{n-\frac{1}{2}} \right\rangle\\
     &-\frac{h_f^2}{2}\left\langle \psi_x^f(V^{n-\frac{1}{2}},U^{n-\frac{1}{2}})-\psi_x^f(v_f^{n-\frac{1}{2}},u^{n-\frac{1}{2}})~,~e^{n-\frac{1}{2}} \right\rangle-\frac{h_f^2}{2}\left\langle \psi_y^f(W^{n-\frac{1}{2}},U^{n-\frac{1}{2}})-\psi_y^f(w_f^{n-\frac{1}{2}},u^{n-\frac{1}{2}})~,~e^{n-\frac{1}{2}} \right\rangle\\
     &=\lambda\left\langle \rho^{n-\frac{1}{2}}+\gamma^{n-\frac{1}{2}}~,~e^{n-\frac{1}{2}} \right\rangle + \left\langle P_f^{n-\frac{1}{2}}~,~e^{n-\frac{1}{2}} \right\rangle, \quad 1 \leq n \leq N^f.
   \end{split}
 \end{equation}
 In what follows, each term of (\ref{equ4.23}) is analyzed sequentially.\\
 \begin{equation} \label{equ4.24}
  \textbf{(\Rmnum{1})}:=\left\langle \delta_t e^{n-\frac{1}{2}}~,~e^{n-\frac{1}{2}} \right\rangle =\frac{1}{2\tau_f}(\| e^n \|^2-\| e^{n-1} \|^2).
 \end{equation}
  For the second term, with the aid of Lemmas \ref{lem2.1}-\ref{lemma2.4}, Cauchy–Schwarz inequality, and Young inequality, and combining $u_f^{n-\frac{1}{2}}=U^{n-\frac{1}{2}}-e_f^{n-\frac{1}{2}}$ with $u^{n-\frac{1}{2}}=U^{n-\frac{1}{2}}-e^{n-\frac{1}{2}} $, we have
 \begin{equation*}
   \begin{split}
    \textbf{(\Rmnum{2})}:= &-\left\langle \psi_h^f(U^{n-\frac{1}{2}},U^{n-\frac{1}{2}})-\psi_h^f(u_f^{n-\frac{1}{2}},u^{n-\frac{1}{2}})~,~e^{n-\frac{1}{2}} \right\rangle\\
     =&-\left\langle \psi_h^f(U^{n-\frac{1}{2}}~,~e^{n-\frac{1}{2}})+\psi_h^f(e_f^{n-\frac{1}{2}},U^{n-\frac{1}{2}})-\psi_h^f(e_f^{n-\frac{1}{2}}~,~e^{n-\frac{1}{2}})~,~e^{n-\frac{1}{2}} \right\rangle\\
     =&-\left\langle \psi_h^f(e_f^{n-\frac{1}{2}},U^{n-\frac{1}{2}})~,~e^{n-\frac{1}{2}}\right\rangle= -\frac{1}{3}\left[\left\langle e_f^{n-\frac{1}{2}}\widehat{\delta}_h^f U^{n-\frac{1}{2}}~,~e^{n-\frac{1}{2}} \right\rangle+\left\langle \widehat{\delta}_h^f(e_f^{n-\frac{1}{2}} U^{n-\frac{1}{2}})~,~e^{n-\frac{1}{2}} \right\rangle\right]\\
     =&-\frac{1}{3}\left\langle e_f^{n-\frac{1}{2}}\widehat{\delta}_h^f U^{n-\frac{1}{2}}~,~e^{n-\frac{1}{2}} \right\rangle+\frac{1}{3}\left\langle e_f^{n-\frac{1}{2}} U^{n-\frac{1}{2}}, \widehat{\delta}_h^f e^{n-\frac{1}{2}} \right\rangle\\
     \leq &\frac{c_0}{3}\|e_f^{n-\frac{1}{2}}\|\|e^{n-\frac{1}{2}}\|+\frac{\sqrt{2}c_0}{3}\|e_f^{n-\frac{1}{2}}\||e^{n-\frac{1}{2}}|_{1}\\
     \leq &\frac{c_0}{6}\|e_f^{n-\frac{1}{2}}\|^2+\frac{c_0}{6}\|e^{n-\frac{1}{2}}\|^2+\frac{c_0^2}{9\lambda}\|e_f^{n-\frac{1}{2}}\|^2+\frac{\lambda}{2}|e^{n-\frac{1}{2}}|_{1}^2.
   \end{split}
 \end{equation*}
 Similarly, using $v_f^{n-\frac{1}{2}}=V^{n-\frac{1}{2}}-\rho_f^{n-\frac{1}{2}}$ and $u^{n-\frac{1}{2}}=U^{n-\frac{1}{2}}-e^{n-\frac{1}{2}} $, we can estimate the third term as follows
 \begin{equation*}
   \begin{split}
    \textbf{(\Rmnum{3})}:=\; &\frac{h_f^2}{2}\left\langle \psi_x^f(V^{n-\frac{1}{2}},U^{n-\frac{1}{2}})-\psi_x^f(v_f^{n-\frac{1}{2}},u^{n-\frac{1}{2}})~,~e^{n-\frac{1}{2}} \right\rangle\\
     =\;&\frac{h_f^2}{2}\left\langle \psi_x^f(V^{n-\frac{1}{2}},e^{n-\frac{1}{2}})+\psi_x^f(\rho_f^{n-\frac{1}{2}},U^{n-\frac{1}{2}})-\psi_x^f(\rho_f^{n-\frac{1}{2}},e^{n-\frac{1}{2}}) ~,~e^{n-\frac{1}{2}}\right\rangle\\
       =\;& \frac{h_f^2}{2}\left\langle \psi_x^f(\rho_f^{n-\frac{1}{2}},U^{n-\frac{1}{2}})~,~e^{n-\frac{1}{2}}\right\rangle= \frac{h_f^2}{6}\left[\left\langle \rho_f^{n-\frac{1}{2}}\widehat{\delta}_x^f U^{n-\frac{1}{2}}~,~e^{n-\frac{1}{2}} \right\rangle-\left\langle \rho_f^{n-\frac{1}{2}} U^{n-\frac{1}{2}}~,~\widehat{\delta}_x^f e^{n-\frac{1}{2}} \right\rangle\right]\\
       \leq &\frac{h_f^2c_0}{6}\| \rho_f^{n-\frac{1}{2}}\|\|e^{n-\frac{1}{2}}\|+\frac{h_f^2c_0}{6}\|\rho_f^{n-\frac{1}{2}}\|\|\widehat{\delta}_x^f e^{n-\frac{1}{2}}\|.
\end{split}
\end{equation*}
Applying \eqref{equ4.17} and Lemma \ref{lemma2.3} to the above inequality, we have
\begin{equation*}
   \begin{split}
    \textbf{(\Rmnum{3})} &\leq \frac{h_f^2c_0}{6}\left(\frac{6}{h_f^2}\|e_f^{n-\frac{1}{2}}\|+ \frac{3}{2}\|Q_f^{n-\frac{1}{2}}\|\right)\|e^{n-\frac{1}{2}}\|+\frac{h_f^2c_0}{6}\left(\frac{6}{h_f^2}\|e_f^{n-\frac{1}{2}}\|+ \frac{3}{2}\|Q_f^{n-\frac{1}{2}}\|\right)\|\widehat{\delta}_x^f e^{n-\frac{1}{2}}\|\\
      &\leq c_0 \|e_f^{n-\frac{1}{2}}\| \|e^{n-\frac{1}{2}}\|+\frac{h_f^2c_0}{4}\|Q_f^{n-\frac{1}{2}}\|\|e^{n-\frac{1}{2}}\|+ c_0 \|e_f^{n-\frac{1}{2}}\| \|\widehat{\delta}_x^f e^{n-\frac{1}{2}}\|+\frac{h_f c_0}{2}\|Q_f^{n-\frac{1}{2}}\| \| e^{n-\frac{1}{2}}\|.
   \end{split}
\end{equation*}
 Then, by the same analysis, we can estimate the fourth term as follows
 \begin{equation*}
   \begin{split}
    \textbf{(\Rmnum{4})}:=& ~ \frac{h_f^2}{2}\left\langle \psi_y^f(W^{n-\frac{1}{2}},U^{n-\frac{1}{2}})-\psi_y^f(w^{n-\frac{1}{2}},u_f^{n-\frac{1}{2}})~,~e^{n-\frac{1}{2}} \right\rangle\\
       \leq &\frac{h_f^2c_0}{6}\| \gamma_f^{n-\frac{1}{2}}\|\|e^{n-\frac{1}{2}}\|+\frac{h_f^2c_0}{6}\|\gamma_f^{n-\frac{1}{2}}\|\|\widehat{\delta}_y^f e^{n-\frac{1}{2}}\|\\
       \leq & c_0 \|e_f^{n-\frac{1}{2}}\| \|e^{n-\frac{1}{2}}\|+\left(\frac{h_f^2c_0}{4}+\frac{h_f c_0}{2}\right)\|R_f^{n-\frac{1}{2}}\|\|e^{n-\frac{1}{2}}\|+ c_0 \|e_f^{n-\frac{1}{2}}\| \|\widehat{\delta}_y^f e^{n-\frac{1}{2}}\|.
\end{split}
\end{equation*}
     Furthermore, combining \textbf{(\Rmnum{3})} with \textbf{(\Rmnum{4})}, and using Young inequality, we have
 \begin{equation*}
   \begin{split}
    &\textbf{(\Rmnum{3})}+\textbf{(\Rmnum{4})}\\
     & \leq 2c_0 \|e_f^{n-\frac{1}{2}}\| \|e^{n-\frac{1}{2}}\|+\left(\frac{h_f^2c_0}{4}+\frac{h_f c_0}{2}\right)\Big(\|Q_f^{n-\frac{1}{2}}\|+\|R_f^{n-\frac{1}{2}}\|\Big)\|e^{n-\frac{1}{2}}\| + c_0 \|e_f^{n-\frac{1}{2}}\| \Big(\|\widehat{\delta}_x^f e^{n-\frac{1}{2}}\|+\|\widehat{\delta}_y^f e^{n-\frac{1}{2}}\|\Big)\\
     &\leq 2c_0 \|e_f^{n-\frac{1}{2}}\| \|e^{n-\frac{1}{2}}\|+\left(\frac{h_f^2c_0}{4}+\frac{h_f c_0}{2}\right)\Big(\|Q_f^{n-\frac{1}{2}}\|+\|R_f^{n-\frac{1}{2}}\|\Big)\|e^{n-\frac{1}{2}}\|+\sqrt{2}c_0 \|e_f^{n-\frac{1}{2}}\| |e^{n-\frac{1}{2}}|_{1}\\
     &\leq c_0 \|e_f^{n-\frac{1}{2}}\|^2+c_0 \|e^{n-\frac{1}{2}}\|^2+\left(\frac{h_f^2c_0}{8}+\frac{h_f c_0}{4}\right)\left(\|Q_f^{n-\frac{1}{2}}\|^2+\|R_f^{n-\frac{1}{2}}\|^2+2\|e^{n-\frac{1}{2}}\|^2\right)+\frac{\lambda}{2} |e^{n-\frac{1}{2}}|_{1}^2+\frac{c_0^2}{\lambda}\|e_f^{n-\frac{1}{2}}\|^2\\
     & = ~ \left(c_0+\frac{c_0^2}{\lambda}\right)\|e_f^{n-\frac{1}{2}}\|^2+\left(\frac{h_f^2c_0}{4}+\frac{h_f c_0}{2}\right)\|e^{n-\frac{1}{2}}\|^2+\frac{\lambda}{2} |e^{n-\frac{1}{2}}|_{1}^2 +\left(\frac{h_f^2c_0}{8}+\frac{h_f c_0}{4}\right)\Big(\|Q_f^{n-\frac{1}{2}}\|^2+\|R_f^{n-\frac{1}{2}}\|^2\Big).
   \end{split}
 \end{equation*}
 By invoking Lemma \ref{lemma2.6}, we derive the estimate for the fifth term in (\ref{equ4.23}) as follows
 \begin{equation*}
   \begin{split}
    \textbf{(\Rmnum{5})}&:= \lambda\left\langle \rho^{n-\frac{1}{2}}+\gamma^{n-\frac{1}{2}}~,~e^{n-\frac{1}{2}} \right\rangle \\
    &\leq \lambda\left( -|e^{n-\frac{1}{2}} |_{1}^2-\frac{h_f^2}{18}\Big(\|\rho^{n-\frac{1}{2}} \|^2+\|\gamma^{n-\frac{1}{2}} \|^2\Big)+\frac{h_f^2}{12}\Big(\langle Q_f^{n-\frac{1}{2}},\rho^{n-\frac{1}{2}}\rangle+\langle R_f^{n-\frac{1}{2}},\gamma^{n-\frac{1}{2}}\rangle\Big) +\langle Q_f^{n-\frac{1}{2}}+R_f^{n-\frac{1}{2}}~,~e^{n-\frac{1}{2}}\rangle\right)\\
     &\leq \lambda\left( -|e^{n-\frac{1}{2}} |_{1}^2+\Big(\frac{h_f^2}{32}+\frac{1}{2}\Big)\Big(\|Q_f^{n-\frac{1}{2}} \|^2+\|R_f^{n-\frac{1}{2}} \|^2\Big)+\|e^{n-\frac{1}{2}} \|^2\right),
   \end{split}
 \end{equation*}
 where we have used the Cauchy-Schwarz inequality and Young inequality to estimate 
 \begin{equation*}\begin{cases}
\begin{split}
       &\langle Q_f^{n-\frac{1}{2}},\rho^{n-\frac{1}{2}}\rangle\leq \|Q_f^{n-\frac{1}{2}} \|\|\rho^{n-\frac{1}{2}} \| \leq \frac{3}{8}\|Q_f^{n-\frac{1}{2}} \|^2+\frac{2}{3}\|\rho^{n-\frac{1}{2}} \|^2;\\
       &\langle R_f^{n-\frac{1}{2}},\gamma^{n-\frac{1}{2}}\rangle \leq \|R_f^{n-\frac{1}{2}} \|\|\gamma^{n-\frac{1}{2}} \| \leq \frac{3}{8}\|R_f^{n-\frac{1}{2}} \|^2+\frac{2}{3}\|\gamma^{n-\frac{1}{2}} \|^2;\\
      &\langle Q_f^{n-\frac{1}{2}}+R_f^{n-\frac{1}{2}}~,~e^{n-\frac{1}{2}}\rangle \leq \|Q_f^{n-\frac{1}{2}}\|\|e^{n-\frac{1}{2}} \|+\|R_f^{n-\frac{1}{2}}\|\|e^{n-\frac{1}{2}} \| \leq \frac{1}{2}\|Q_f^{n-\frac{1}{2}}\|^2+\frac{1}{2}\|R_f^{n-\frac{1}{2}}\|^2+\|e^{n-\frac{1}{2}} \|^2.
   \end{split}
 \end{cases}
 \end{equation*}
As the last term in (\ref{equ4.23}), we have
 \begin{equation*}
  \textbf{(\Rmnum{6})}:= \langle P_f^{n-\frac{1}{2}}~,~e^{n-\frac{1}{2}}\rangle \leq \frac{1}{2}\| P_f^{n-\frac{1}{2}}\|^2+\frac{1}{2}\| e^{n-\frac{1}{2}}\|^2.
 \end{equation*}
 Substituting the above results of the estimates for terms \textbf{(\Rmnum{1})}–\textbf{(\Rmnum{6})} into (\ref{equ4.23}) yields
 \begin{equation*} 
   \begin{split}
     \frac{1}{2\tau_f}\left( \|e^n \|^2-\|e^{(n-1)} \|^2 \right)\leq c_1\| e^{n-\frac{1}{2}}\|^2+c_2\| e_f^{n-\frac{1}{2}}\|^2+c_3\|Q_f^{n-\frac{1}{2}}\|^2+c_3\|R_f^{n-\frac{1}{2}}\|^2+\frac{1}{2}\| P_f^{n-\frac{1}{2}}\|^2,
   \end{split}
 \end{equation*}
   where
    \begin{equation} \label{equ4.25}
   c_1=\frac{c_0}{6}+\frac{h_f^2c_0}{4}+\frac{h_f c_0}{2}+\lambda+\frac{1}{2}, \quad c_2=\frac{7c_0}{6}+\frac{10c_0^2}{9\lambda}, \quad c_3=\lambda\left(\frac{h_f^2}{32}+\frac{1}{2}\right)+\frac{h_f^2c_0}{8}+\frac{h_f c_0}{4}.
   \end{equation}
 Recall the truncation errors $(P_f)_{ij}^{n-\frac{1}{2}}, (Q_f)_{ij}^{n}$ and $(R_f)_{ij}^{n}$ in \eqref{eq3.4}-\eqref{eq3.6} with $\sigma=f$. Meanwhile, combining the estimation result of the second step \eqref{eq4.10}, we can obtain
 $$\frac{1}{2\tau_f}\left( \|e^n \|^2-\|e^{(n-1)} \|^2 \right)\leq \frac{c_1}{2}\left(\|e^n \|^2+\|e^{(n-1)} \|^2\right)+\widetilde{C}(\tau_f^2+h_f^4)^2, $$
 that is
 \begin{equation} \label{equ4.26}
    \|e^n \|^2-\|e^{(n-1)} \|^2 \leq c_1\tau_f \left(\|e^n \|^2+\|e^{(n-1)} \|^2\right)+2\widetilde{C}\tau_f(\tau_f^2+h_f^4)^2.
 \end{equation}
 For any integer $1\leq m \leq N^f$, summing the above inequality over $n$ from 1 to $m$, we get
  $$ \|e^{m} \|^2 \leq c_1\tau_f\|e^{m} \|^2+\sum \limits_{n=1}^{m-1}2c_1\tau_f\|e^n \|^2+2\widetilde{C}m\tau_f(\tau_f^2+h_f^4)^2. $$
  When $c_1\tau_f<2/3$, apply the discrete Gr\"{o}nwall inequality, namely, Lemma \ref{lemma2.7}, we can obtain
  \begin{equation*}
   \begin{split}
     \|e^{m} \|^2 \leq  \sum \limits_{n=1}^{m-1}6c_1\tau_f\|e^n \|^2+6\widetilde{C}T(\tau_f^2+h_f^4)^2
     \leq  6\widetilde{C}T(\tau_f^2+h_f^4)^2\exp (6c_1{m}\tau_f)
     \leq  \widetilde{C}(\tau_f^2+h_f^4)^2.
   \end{split}
  \end{equation*}
 This completes the proof.
  \end{proof}

  \subsection{Stability}
  For the 1D problem of BE, the stability of the proposed compact difference scheme has been rigorously established, see [\citenum{wangxp}, Theorem 3.6]. In 2D case, however, the classical Sobolev embedding theorem $\|v\|_\infty \leq \widetilde{C}|v|_1$ fails, precluding an analogous stability proof for the first step of ST-TGCD (or NCD) scheme \eqref{eq3.7}-\eqref{eq3.11}. To the best of our knowledge, at present, there is no proof of the stability of the compact difference scheme for PDEs with 2D Burgers’ type nonlinearity. In reference \cite{ZQF}, the authors only gave the proof of convergence for a linear compact scheme solving the 2D Sobolev equation with Burgers’ type nonlinearity, but not of stability. Although we can't prove the stability of the first step of ST-TGCD scheme here, we are able to establish its boundedness in $L^2$-norm.
\subsubsection{Boundedness of the step 1.}
\begin{theorem}
   Suppose $\{(u_c)_{pq}^r,(v_c)_{pq}^r,(v_c)_{pq}^r~|~(x_p,y_q, t_r)\in \Omega_h^c\times \Omega_\tau^c\}$ is the solution of the coarse grid system \eqref{eq3.12}-\eqref{eq3.16}, we have
   \begin{equation} \label{eq4.28}
      \|(u_c)^{m}\|_c \leq \|(u_c)^{0}\|_c, \quad 1 \leq m \leq N^c.
   \end{equation}
\end{theorem}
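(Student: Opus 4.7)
The plan is to carry out a standard energy estimate on the coarse-grid scheme \eqref{eq3.12}--\eqref{eq3.16}, relying on the orthogonality of the nonlinear convective operators and the negativity supplied by the diffusive/auxiliary pair.

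First I would take the discrete inner product $\langle \cdot,\cdot\rangle_c$ of \eqref{eq3.12} with $(u_c)^{r-\frac{1}{2}}_{pq}$. By Lemma \ref{lemma2.4}, all three bilinear convection terms vanish:
\begin{equation*}
\bigl\langle \psi_h^c\bigl((u_c)^{r-\frac{1}{2}},(u_c)^{r-\frac{1}{2}}\bigr),(u_c)^{r-\frac{1}{2}}\bigr\rangle_c
= \bigl\langle \psi_x^c\bigl((v_c)^{r-\frac{1}{2}},(u_c)^{r-\frac{1}{2}}\bigr),(u_c)^{r-\frac{1}{2}}\bigr\rangle_c
= \bigl\langle \psi_y^c\bigl((w_c)^{r-\frac{1}{2}},(u_c)^{r-\frac{1}{2}}\bigr),(u_c)^{r-\frac{1}{2}}\bigr\rangle_c = 0.
\end{equation*}
Combined with the standard identity $\langle \delta_t^c (u_c)^{r-\frac{1}{2}},(u_c)^{r-\frac{1}{2}}\rangle_c = \frac{1}{2\tau_c}\bigl(\|(u_c)^r\|_c^2 - \|(u_c)^{r-1}\|_c^2\bigr)$, this reduces the energy identity to
\begin{equation*}
\frac{1}{2\tau_c}\bigl(\|(u_c)^r\|_c^2 - \|(u_c)^{r-1}\|_c^2\bigr) = \lambda\bigl\langle (v_c)^{r-\frac{1}{2}} + (w_c)^{r-\frac{1}{2}},\,(u_c)^{r-\frac{1}{2}}\bigr\rangle_c.
\end{equation*}

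Next I would handle the right-hand side by averaging \eqref{eq3.13} and \eqref{eq3.14} at consecutive time levels, which yields the relations
\begin{equation*}
(v_c)^{r-\frac{1}{2}} = \delta_{xx}^c (u_c)^{r-\frac{1}{2}} - \tfrac{h_c^2}{12}\delta_{xx}^c (v_c)^{r-\frac{1}{2}}, \qquad (w_c)^{r-\frac{1}{2}} = \delta_{yy}^c (u_c)^{r-\frac{1}{2}} - \tfrac{h_c^2}{12}\delta_{yy}^c (w_c)^{r-\frac{1}{2}}.
\end{equation*}
These are precisely the hypotheses of Lemma \ref{lemma2.6} with the forcing terms $S=T=0$, taking $u=(u_c)^{r-\frac{1}{2}}$, $v=(v_c)^{r-\frac{1}{2}}$, $w=(w_c)^{r-\frac{1}{2}}$. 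The lemma then yields
\begin{equation*}
\bigl\langle (v_c)^{r-\frac{1}{2}} + (w_c)^{r-\frac{1}{2}},\,(u_c)^{r-\frac{1}{2}}\bigr\rangle_c \leq -\,|(u_c)^{r-\frac{1}{2}}|_{1,c}^2 - \tfrac{h_c^2}{18}\bigl(\|(v_c)^{r-\frac{1}{2}}\|_c^2 + \|(w_c)^{r-\frac{1}{2}}\|_c^2\bigr) \leq 0.
\end{equation*}

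Putting the two pieces together gives $\|(u_c)^r\|_c^2 \leq \|(u_c)^{r-1}\|_c^2$ for every $1\leq r\leq N^c$, and the conclusion \eqref{eq4.28} follows by induction on $r$. I do not expect any essential obstacle: the proof is entirely structural, relying on the skew-symmetric design of the operators $\psi_x^c,\psi_y^c,\psi_h^c$ through Lemma \ref{lemma2.4} and on the dissipative coercivity in Lemma \ref{lemma2.6}. The only point that requires mild care is checking that the averaged auxiliary variables still satisfy the hypotheses of Lemma \ref{lemma2.6} (they do, by linearity of $\delta_{xx}^c$ and $\delta_{yy}^c$), and observing that no smallness assumption on $\tau_c$ is needed because the Crank--Nicolson-type time differencing produces an exact energy identity rather than an inequality with lower-order remainders.
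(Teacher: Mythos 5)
Your proposal is correct and follows essentially the same route as the paper: take the $\langle\cdot,\cdot\rangle_c$ inner product of \eqref{eq3.12} with $(u_c)^{r-\frac{1}{2}}$, kill the convective terms via Lemma \ref{lemma2.4}, apply Lemma \ref{lemma2.6} with $S=T=0$ to the averaged auxiliary relations to get a nonpositive right-hand side, and sum (or induct) over $r$. Your explicit check that the time-averaged versions of \eqref{eq3.13}--\eqref{eq3.14} satisfy the hypotheses of Lemma \ref{lemma2.6} is a minor elaboration of a step the paper leaves implicit, not a different argument.
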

\begin{proof}
   Taking an inner product of \eqref{eq3.12} with $(u_c)^{r-\frac{1}{2}}~(1\leq r \leq m)$, and using Lemma \ref{lemma2.4}, we have
   \begin{equation} \label{eq4.29}
      \left\langle \delta_t^c (u_c)^{r-\frac{1}{2}}~,~(u_c)^{r-\frac{1}{2}} \right\rangle _c = \lambda \left\langle (v_c)^{r-\frac{1}{2}}+(w_c)^{r-\frac{1}{2}}~,~(u_c)^{r-\frac{1}{2}} \right\rangle _c.
   \end{equation}
   By taking $S=T=0$ in Lemma \ref{lemma2.6}, we have
   \begin{equation} \label{eq4.30}
      \left\langle (v_c)^{r-\frac{1}{2}}+(w_c)^{r-\frac{1}{2}}~,~(u_c)^{r-\frac{1}{2}} \right\rangle _c \leq  -|(u_c)^{r-\frac{1}{2}}|_{1,c}-\frac{h_c^2}{18}\Big(\|(v_c)^{r-\frac{1}{2}} \|_c^2+\|(w_c)^{r-\frac{1}{2}} \|_c^2\Big) \leq 0.
   \end{equation}
   Substituting \eqref{eq4.30} into \eqref{eq4.29}, we obtain
   \begin{equation*}
      \frac{1}{2\tau_c}\left( \|(u_c)^{r} \|_c^2-\|(u_c)^{(r-1)} \|_c^2 \right) \leq 0.
   \end{equation*}
   As long as summing up the above inequality for $r$ from $1$ to $m$, then we can obtain \eqref{eq4.28}.
\end{proof}

In the second step of the ST-TGCD scheme, the approximate solution \((u_f)_{ij}^n\) is constructed solely by Lagrange interpolation. Consequently, no additional stability analysis is required. In what follows, we shall focus on the stability of the last step (step 3).

\subsubsection{Stability of the step 3.}
Assume $\{\tilde{u}_{ij}^n,\tilde{v}_{ij}^n,\tilde{w}_{ij}^n~|~(x_{i}, y_{j})\in \Omega_h^f ,~ t_n\in\Omega_\tau^f\}$ is the solution of the following perturbation equation with periodic condition.
\begin{align}
\notag  &\delta_{t}^f \tilde{u}^{n-\frac{1}{2}}_{ij} + \psi_h^f\Big((u_f)^{n-\frac{1}{2}}_{ij}, \tilde{u}^{n-\frac{1}{2}}_{ij}\Big) - \frac{h_{f}^{2}}{2}\left[\psi_x^f\Big((v_f)^{n-\frac{1}{2}}_{ij}, \tilde{u}^{n-\frac{1}{2}}_{ij}\Big)+\psi_y^f\Big((w_f)^{n-\frac{1}{2}}_{ij}, \tilde{u}^{n-\frac{1}{2}}_{ij}\Big)\right]  \\ 
\label{eq4.31}  &\quad = \lambda(\tilde{v}^{n-\frac{1}{2}}_{ij} + \tilde{w}^{n-\frac{1}{2}}_{ij})+\varepsilon^{n-\frac{1}{2}}_{ij} , \quad (x_{i}, y_{j})\in \Omega_h^f ,\quad t_n\in\Omega_\tau^f\setminus\{0\},\\
\label{eq4.32} &\tilde{v}_{ij}^n = \delta_{xx}^f \tilde{u}_{ij}^n - \frac{h_{f}^2}{12} \delta_{xx}^f \tilde{v}_{ij}^n, \quad (x_{i}, y_{j})\in \Omega_h^f ,\quad t_n\in\Omega_\tau^f, \\
\label{eq4.33} &\tilde{w}_{ij}^n = \delta_{yy}^f \tilde{u}_{ij}^n - \frac{h_{f}^2}{12} \delta_{yy}^f \tilde{w}_{ij}^n, \quad (x_{i}, y_{j})\in \Omega_h^f ,\quad t_n\in\Omega_\tau^f,\\
\label{eq4.34} &\tilde{u}_{ij}^0 = {u}_0(x_i,y_j)+\eta_{ij}, \quad (x_{i}, y_{j})\in \Omega_h^f,
\end{align}
where \(\varepsilon^{n-\frac{1}{2}}_{ij}\) and \(\eta_{ij}\) are the samll perturbations. Subtracting \eqref{step3-1}-\eqref{step3-4} from \eqref{eq4.31}-\eqref{eq4.34}, respectively, to yield
\begin{align}
\notag  &\delta_{t}^f \hat{u}^{n-\frac{1}{2}}_{ij} + \psi_h^f\left((u_f)^{n-\frac{1}{2}}_{ij}, \hat{u}^{n-\frac{1}{2}}_{ij}\right) - \frac{h_{f}^{2}}{2}\left[\psi_x^f\left((v_f)^{n-\frac{1}{2}}_{ij}, \hat{u}^{n-\frac{1}{2}}_{ij}\right)+\psi_y^f\left((w_f)^{n-\frac{1}{2}}_{ij}, \hat{u}^{n-\frac{1}{2}}_{ij}\right)\right]  \\ 
\label{eq4.36}  &\quad = \lambda(\hat{v}^{n-\frac{1}{2}}_{ij} + \hat{w}^{n-\frac{1}{2}}_{ij})+\varepsilon^{n-\frac{1}{2}}_{ij} , \quad (x_{i}, y_{j})\in \Omega_h^f ,\quad t_n\in\Omega_\tau^f\setminus\{0\},\\
\label{eq4.37} &\hat{v}_{ij}^n = \delta_{xx}^f \hat{u}_{ij}^n - \frac{h_{f}^2}{12} \delta_{xx}^f \hat{v}_{ij}^n, \quad (x_{i}, y_{j})\in \Omega_h^f ,\quad t_n\in\Omega_\tau^f, \\
\label{eq4.38} &\hat{w}_{ij}^n = \delta_{yy}^f \hat{u}_{ij}^n - \frac{h_{f}^2}{12} \delta_{yy}^f \hat{w}_{ij}^n, \quad (x_{i}, y_{j})\in \Omega_h^f ,\quad t_n\in\Omega_\tau^f,\\
\label{eq4.39} &\hat{u}_{ij}^0 = \eta_{ij}, \quad (x_{i}, y_{j})\in \Omega_h^f,
\end{align}
where 
\begin{equation*}
   \hat{u}_{ij}^n = \tilde{u}_{ij}^n - u_{ij}^n, \quad \hat{v}_{ij}^n = \tilde{v}_{ij}^n - v_{ij}^n, \quad \hat{w}_{ij}^n = \tilde{w}_{ij}^n - w_{ij}^n.
\end{equation*}
\begin{theorem}
   Let \(\{\hat{u}_{ij}^n, \hat{v}_{ij}^n, \hat{w}_{ij}^n~|~(x_i,y_j)\in \Omega_h^f,~t_n\in\Omega_\tau^f\}\) be the solution of the residual system with periodic condition \eqref{eq4.36}-\eqref{eq4.39}. Then, for any \(1\leq n \leq N^f\), it holds that
   \begin{equation} \label{eq4.41}
      \|\hat{u}^n\| \leq \exp(2T) \Big(2\|\eta\|^2 + 2\tau_f\sum_{k=1}^{n}\|\varepsilon^{k-\frac{1}{2}} \|^2 \Big).
   \end{equation}
\end{theorem}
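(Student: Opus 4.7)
The plan is to run the standard energy-argument template used throughout Section~\ref{sec4}, but now applied to the perturbation system \eqref{eq4.36}--\eqref{eq4.39}, which is structurally identical to the fine-grid linear correction scheme \eqref{step3-1}--\eqref{step3-5} except that its right-hand side carries the extra forcing term $\varepsilon^{n-\frac{1}{2}}_{ij}$ and that its initial data is $\eta_{ij}$. Accordingly I expect the whole argument to reduce to an $L^2$ energy identity with a small bookkeeping of the perturbation term and a final discrete Gr\"onwall step. I also interpret the statement as a bound on $\|\hat{u}^n\|^2$ (the LHS is most naturally squared, matching the quadratic RHS).

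First, take the discrete inner product of \eqref{eq4.36} with $\hat{u}^{n-\frac{1}{2}}$ on $\mathcal{V}_h^f$. Lemma~\ref{lemma2.4}, applied to each of the bilinear operators $\psi_h^f$, $\psi_x^f$, $\psi_y^f$ with their second argument equal to $\hat{u}^{n-\frac{1}{2}}$, annihilates all three convection-like terms. Combining the time-difference identity
\begin{equation*}
\langle \delta_t^f \hat{u}^{n-\frac{1}{2}}, \hat{u}^{n-\frac{1}{2}}\rangle = \frac{1}{2\tau_f}\bigl(\|\hat{u}^n\|^2 - \|\hat{u}^{n-1}\|^2\bigr)
\end{equation*}
with Lemma~\ref{lemma2.6} applied to \eqref{eq4.37}--\eqref{eq4.38} with $S=T=0$, which gives
\begin{equation*}
\lambda\langle \hat{v}^{n-\frac{1}{2}} + \hat{w}^{n-\frac{1}{2}}, \hat{u}^{n-\frac{1}{2}}\rangle \le -\lambda|\hat{u}^{n-\frac{1}{2}}|_1^2 - \frac{\lambda h_f^2}{18}\bigl(\|\hat{v}^{n-\frac{1}{2}}\|^2 + \|\hat{w}^{n-\frac{1}{2}}\|^2\bigr) \le 0,
\end{equation*}
leaves us with
\begin{equation*}
\frac{1}{2\tau_f}\bigl(\|\hat{u}^n\|^2 - \|\hat{u}^{n-1}\|^2\bigr) \le \langle \varepsilon^{n-\frac{1}{2}}, \hat{u}^{n-\frac{1}{2}}\rangle.
\end{equation*}

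Next, I bound the forcing term by Cauchy--Schwarz and Young: $\langle \varepsilon^{n-\frac{1}{2}}, \hat{u}^{n-\frac{1}{2}}\rangle \le \tfrac{1}{2}\|\varepsilon^{n-\frac{1}{2}}\|^2 + \tfrac{1}{4}(\|\hat{u}^n\|^2+\|\hat{u}^{n-1}\|^2)$. Multiplying by $2\tau_f$, summing $n$ from $1$ to $m$, and using $\hat{u}^0=\eta$ yields
\begin{equation*}
\|\hat{u}^m\|^2 \le \|\eta\|^2 + \tau_f\sum_{k=1}^{m}\|\varepsilon^{k-\frac{1}{2}}\|^2 + \tau_f\sum_{k=1}^{m}\tfrac{1}{2}\bigl(\|\hat{u}^k\|^2 + \|\hat{u}^{k-1}\|^2\bigr).
\end{equation*}
Absorbing the $\|\hat{u}^m\|^2$ term on the right into the left (which only costs a harmless constant provided $\tau_f$ is small, an assumption implicitly in force by Theorem~\ref{Th4.4}) brings the inequality into the canonical form required by Lemma~\ref{lemma2.7}. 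Applying the discrete Gr\"onwall lemma with $B_m = 2\|\eta\|^2 + 2\tau_f\sum_{k=1}^m\|\varepsilon^{k-\frac{1}{2}}\|^2$ then produces the claimed bound with factor $\exp(2T)$.

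I do not expect any real obstacle: the nonlinear-looking convection terms in \eqref{eq4.36} are \emph{linear} in $\hat{u}^{n-\frac{1}{2}}$ (since the coefficients $(u_f), (v_f), (w_f)$ are frozen from Step~2), so the orthogonality Lemma~\ref{lemma2.4} kills them outright, and the diffusion piece is handled by the off-the-shelf Lemma~\ref{lemma2.6}. The only minor point of care is consistency of constants between the statement (coefficient $\exp(2T)$, factors of $2$) and what the inequality actually gives; these arise from the particular splitting in Young's inequality and the $\tau_f$ smallness condition used to absorb $\|\hat{u}^m\|^2$ on the right, so I would simply choose the Young weights to match the stated constants.
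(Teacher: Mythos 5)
Your proposal is correct and follows essentially the same route as the paper's own proof: inner product with $\hat{u}^{n-\frac{1}{2}}$, Lemma \ref{lemma2.4} to annihilate the frozen-coefficient convection terms, Lemma \ref{lemma2.6} with $S=T=0$ for the diffusion part, Cauchy--Schwarz/Young on the perturbation, summation, absorption of the $\tfrac{\tau_f}{2}\|\hat{u}^m\|^2$ term under a mild smallness condition on $\tau_f$ (the paper uses $\tau_f\leq 1$), and the discrete Gr\"onwall lemma with $B_m=2\|\eta\|^2+2\tau_f\sum_{k=1}^{m}\|\varepsilon^{k-\frac{1}{2}}\|^2$. Your reading that the left-hand side of \eqref{eq4.41} should be $\|\hat{u}^n\|^2$ matches what the paper's argument actually establishes.
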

\begin{proof}
   Taking an inner product of \eqref{eq4.36} with \(\hat{u}^{n-\frac{1}{2}}\) and applying Lemma \ref{lemma2.4}, we have
   \begin{equation} \label{eq4.42}
      \left\langle \delta_t^f \hat{u}^{n-\frac{1}{2}}~,~\hat{u}^{n-\frac{1}{2}} \right\rangle = \lambda \left\langle \hat{v}^{n-\frac{1}{2}}+\hat{w}^{n-\frac{1}{2}}~,~\hat{u}^{n-\frac{1}{2}} \right\rangle + \left\langle \varepsilon^{n-\frac{1}{2}}~,~\hat{u}^{n-\frac{1}{2}} \right\rangle.
   \end{equation}
Applying Lemma \ref{lemma2.6}, Cauchy-Schwarz inequality and Young inequality to get
\begin{equation*}
   \begin{cases}
      \begin{split}
        & \left\langle \hat{v}^{n-\frac{1}{2}}+\hat{w}^{n-\frac{1}{2}}~,~\hat{u}^{n-\frac{1}{2}} \right\rangle \leq -|\hat{u}^{n-\frac{1}{2}}|_{1}^2-\frac{h_f^2}{18}\Big(\|\hat{v}^{n-\frac{1}{2}} \|^2+\|\hat{w}^{n-\frac{1}{2}} \|^2\Big) \leq 0,\\
        & \left\langle \varepsilon^{n-\frac{1}{2}}~,~\hat{u}^{n-\frac{1}{2}} \right\rangle \leq \|\varepsilon^{n-\frac{1}{2}} \|\|\hat{u}^{n-\frac{1}{2}} \| \leq \frac{1}{2}\|\varepsilon^{n-\frac{1}{2}} \|^2+\frac{1}{2}\|\hat{u}^{n-\frac{1}{2}} \|^2.
      \end{split}
   \end{cases}
\end{equation*}
Thus, we have
\begin{equation*}
   \frac{1}{2\tau_f}\left( \|\hat{u}^n \|^2-\|\hat{u}^{n-1} \|^2 \right) \leq \frac{1}{2}\|\varepsilon^{n-\frac{1}{2}} \|^2+\frac{1}{2}\|\hat{u}^{n-\frac{1}{2}} \|^2\leq \frac{1}{2}\|\varepsilon^{n-\frac{1}{2}} \|^2+\frac{1}{4}\left( \|\hat{u}^n \|^2+\|\hat{u}^{n-1} \|^2 \right).
\end{equation*}
Summing the above inequality from \(n=1\) to \(m\), we have
\begin{equation*}
   \begin{split}
      \|\hat{u}^m \|^2 -\|\eta\|^2 \leq \frac{1}{2}\tau_f\|\hat{u}^m \|^2+ \tau_f \sum_{n=0}^{m-1}\|\hat{u}^n \|^2-\frac{1}{2}\tau_f\|\eta\|^2+ \tau_f\sum_{n=1}^{m}\|\varepsilon^{n-\frac{1}{2}} \|^2
   \end{split}
\end{equation*}
As long as \(\tau_f\leq 1\), then we get
$$
\|\hat{u}^m \|^2 \leq 2\tau_f \sum_{n=0}^{m-1}\|\hat{u}^n \|^2 +2\|\eta\|^2 + 2\tau_f\sum_{n=1}^{m}\|\varepsilon^{n-\frac{1}{2}} \|^2.
$$
Applying the discrete Gr\"{o}nwall inequality, we obtain
\begin{equation*}
   \|\hat{u}^m \|^2 \leq \Big(2\|\eta\|^2 + 2\tau_f\sum_{n=1}^{m}\|\varepsilon^{n-\frac{1}{2}} \|^2 \Big) \exp(2\tau_f m)\leq \exp(2T) \Big(2\|\eta\|^2 + 2\tau_f\sum_{n=1}^{m}\|\varepsilon^{n-\frac{1}{2}} \|^2 \Big), ~~ 1\leq m \leq N^f.
\end{equation*}
   This completes the proof.
\end{proof}

\section{Numerical Experiment} \label{sec5}
This section presents numerical experiments designed to validate the effectiveness of the proposed Sapce-Time Two-Grid Compact Difference (ST-TGCD) scheme \eqref{eq3.12}-\eqref{step3-5}. To highlight its advantages, we compare its performance with the standard Nonlinear Compact Difference (NCD) scheme \eqref{eq3.7}-\eqref{eq3.11}. Before presenting the numerical results, we assume the problems considered here on a square domain, namely, $L_1=L_2$, which means $M_1^f=M_2^f=:M^f$. All computations were performed on a system running Windows 11, 64-bit with a 12th Gen Intel(R) Core(TM) i7-12700 CPU @ 2.10 GHz and 16.0 GB of RAM, utilizing MATLAB R2023b.

  Let us use $E_{1}(h_f, \tau_f)$, $R_{1}^h$ and $R_{1}^\tau $ to denote NCD scheme's $L^2$-norm error, spatial convergence order and temporal convergence order, respectively.
   \begin{equation*}
     \begin{split}
       &E_{1}(h_f, \tau_f):=\sqrt{h_f^2\sum\limits_{i=1}^{M^f}\sum\limits_{j=1}^{M^f}\left(U_{ij}^{N^f}(h_f, \tau_f)-u_{ij}^{N^f}(h_f, \tau_f)\right)^2},\\
        &R_{1}^h:=\log_2 \left( \frac{E_{1}(h_f, \tau_f)}{E_{1}(h_f/2, \tau_f)} \right),\quad R_{1}^\tau:=\log_2 \left( \frac{E_{1}(h_f, \tau_f)}{E_{1}(h_f,\tau_f/2)} \right),
     \end{split}
   \end{equation*}
   where $U$ denotes the exact solution of the problem \eqref{main_equation}-\eqref{boundary_condition}, and $u$ denotes the numerical solution from the NCD scheme. Analogously, $E_{2}(h_f, \tau_f)$, $\,R_{2}^h$ and $R_{2}^\tau$ can be defined as the ST-TGCD scheme's $L^2$-norm error, spatial convergence order and temporal convergence order, respectively.
   \vskip 0.2mm
   Both the NCD scheme and the coarse-grid stage \eqref{eq3.12}-\eqref{eq3.16} within the ST-TGCD scheme are solved using a fixed-point iteration. Taking the NCD scheme as an example, the iteration proceeds as follows (using the vector notation $\textbf{u}^n$, $\textbf{v}^n$, and $\textbf{w}^n$
  defined for the solution values at time layer $t_n$)
  \newcommand{\tbfu}{\textbf{u}}
   \begin{equation*}
     \begin{cases}
     \frac{1}{\tau_f}(\tbfu^{n,k+1}-\tbfu^{n-1})+\psi_h^f(\tbfu^{n-\frac{1}{2}},\tbfu^{n-\frac{1}{2}})-\frac{h_f^2}{2}\psi_x^f(\textbf{v}^{n-\frac{1}{2}},\tbfu^{n-\frac{1}{2}})\\
     \qquad-\frac{h_f^2}{2}\psi_y^f(\textbf{w}^{n-\frac{1}{2}},\tbfu^{n-\frac{1}{2}})=\frac{\lambda}{2}(\textbf{v}^{n,k+1}+\textbf{v}^{n-1}+\textbf{w}^{n,k+1}+\textbf{w}^{n-1}),\\
     \\
     \psi_h^f(\tbfu^{n-\frac{1}{2}},\tbfu^{n-\frac{1}{2}})=\frac{1}{4}\left( \psi_h^f(\tbfu^{n,k},\tbfu^{n,k+1})+\psi_h^f(\tbfu^{n,k+1},\tbfu^{n-1})+\psi_h^f(\tbfu^{n-1},\tbfu^{n,k+1})+\psi_h^f(\tbfu^{n-1},\tbfu^{n-1})\right),\\
     \\
     \psi_x^f(\textbf{v}^{n-\frac{1}{2}},\tbfu^{n-\frac{1}{2}})=\frac{1}{4}\left( \psi_x^f(\textbf{v}^{n,k},\tbfu^{n,k+1})+\psi_x^f(\textbf{v}^{n,k+1},\tbfu^{n-1})+\psi_x^f(\textbf{v}^{n-1},\tbfu^{n,k+1})+\psi_x^f(\textbf{v}^{n-1},\tbfu^{n-1})\right),\\
     \\
     \psi_y^f(\textbf{w}^{n-\frac{1}{2}},\tbfu^{n-\frac{1}{2}})=\frac{1}{4}\left( \psi_y^f(\textbf{w}^{n,k},\tbfu^{n,k+1})+\psi_y^f(\textbf{w}^{n,k+1},\tbfu^{n-1})+\psi_y^f(\textbf{w}^{n-1},\tbfu^{n,k+1})+\psi_y^f(\textbf{w}^{n-1},\tbfu^{n-1})\right),\\
     \\
   \textbf{v}^{n}=\left(I+\frac{h_f^2}{12}\delta_{xx}^f \right)^{-1}\delta_{xx}^f \tbfu^{n}, \quad \textbf{w}^{n}=\left(I+\frac{h_f^2}{12}\delta_{yy}^f \right)^{-1}\delta_{yy}^f \tbfu^{n},
   \end{cases}
   \end{equation*}
   for $1\leq n \leq N^f.$ Here $k$ denotes the iteration index, $I$ denotes the identity operator, and the initial guess is  $\tbfu^{n,0}=\tbfu^{n-1}$. The iteration stops when $\max\limits_{1\leq i,j\leq M^f}| u_{ij}^{n,k+1}-u_{ij}^{n,k} |\leq 10^{-8}$ or after a maximum of 100 iterations.

   \textbf{Example 1.} (\textit{Manufactured Solution}) We first consider a test case with a presumed exact solution to directly measure errors. The exact solution is
   $$  u(x,y,t)=e^{-t}\sin(\pi x)sin(\pi y),\qquad (x,y,t)\in [0,2]\times[0,2]\times[0,1]. $$
 The initial condition is $u_0(x,y)=\sin(\pi x)\sin(\pi y)$, and the corresponding source term is given as follows
   $$ f(x,y,t)=e^{-t}\sin(\pi x)\sin(\pi y)\left[-1+\pi e^{-t}\sin\Big(\pi (x+y)\Big)+\lambda \pi^2\right]. $$

We solve this problem using both the NCD and ST-TGCD schemes and compare the numerical solutions against the exact solution. The results are summarized in Tables \ref{tb1}-\ref{tb3} and Figures \ref{fig2}-\ref{fig3}.

 \begin{table}
   \center
   \caption{$L^{2}$-norm error, temporal convergence order and CPU time (seconds) behaviors versus time mesh reduction with the fixed spatial-size $h_f=1/50\;(M^f=100)$, spatial step-size ratio $k_h=2$ and temporal step-size ratio $k_\tau=2$.} \label{tb1}
   \begin{tabular}{ccccccccc}
     \toprule
    & &   & \multicolumn{3}{c}{NCD Scheme \eqref{eq3.7}-\eqref{eq3.11}} &\multicolumn{3}{c}{ST-TGCD Scheme \eqref{eq3.12}-\eqref{step3-5}} \\
     \cmidrule(r){4-6}  \cmidrule(r){7-9}
     $\lambda$& $\tau_c$ &$ \tau_f$& $E_{1}$ & $R_{1}^\tau$ & CPU(s) & $E_{2}$ & $R_{2}^\tau$ & CPU(s)\\
    \midrule
    \multirow{4}{*}{\centering 1}
     &1/4  &1/8  &3.3085e-05 & *    &181.84 & 2.5942e-05 & *      &43.58  \\
     &1/8  &1/16 &8.4168e-06 &1.9748&340.56 & 6.3830e-06 & 2.0230 &85.84   \\
     &1/16 &1/32 &2.0986e-06 &2.0039&573.22 & 1.5822e-06 & 2.0123 &161.35  \\
     &1/32 &1/64 &5.1815e-07 &2.0180&1130.50& 3.8097e-07 & 2.0542 &311.88   \\
      \midrule
     \multirow{4}{*}{\centering 0.1}
     &1/4  &1/8  & 3.7122e-04 & *      &276.00  &4.7122e-04 & *    &42.84\\
     &1/8  &1/16 & 9.2946e-05 & 1.9978 &356.44  &1.1766e-04 &2.0018&78.98 \\
     &1/16 &1/32 & 2.3333e-05 & 1.9940 &1095.10 &2.9672e-05 &1.9874&154.71 \\
     &1/32 &1/64 & 5.9296e-06 & 1.9764 &1698.90 &7.7165e-06 &1.9431&302.90  \\
     \bottomrule
   \end{tabular}
 \end{table}

 \begin{table}
   \center
   \caption{$L^{2}$-norm error, temporal convergence order and CPU time (seconds) behaviors versus time mesh reduction with the fixed spatial-size $h_f=1/50$, spatial step-size ratio $k_h=2$ and temporal step-size ratio $k_\tau=3$.} \label{tb2}
   \begin{tabular}{ccccccccc}
     \toprule
        & &   & \multicolumn{3}{c}{NCD Scheme \eqref{eq3.7}-\eqref{eq3.11}} &\multicolumn{3}{c}{ST-TGCD Scheme \eqref{eq3.12}-\eqref{step3-5}} \\
     \cmidrule(r){4-6}  \cmidrule(r){7-9}
     $\lambda$& $\tau_c$ &$ \tau_f$& $E_{1}$ & $R_{1}^\tau$ & CPU(s) & $E_{2}$ & $R_{2}^\tau$ & CPU(s)\\
    \midrule
 
    \multirow{4}{*}{\centering 1}
     &1/4  &1/12 & 1.4957e-05 & *      &264.49  &1.2566e-05 & *    &62.67\\
     &1/8  &1/24 & 3.7374e-06 & 2.0007 &672.26  &3.8342e-06 &1.7126&119.95 \\
     &1/16 &1/48 & 9.2777e-07 & 2.0102 &1226.90 &1.0779e-06 &1.8307&235.37 \\
     &1/32 &1/96 & 2.2621e-07 & 2.0361 &1655.80 &2.4829e-07 &2.1181&462.60  \\
     \midrule

     \multirow{4}{*}{\centering 0.1}
     &1/4  &1/12 & 1.6512e-04 & *      &399.76  &4.8617e-04 & *    &59.45\\
     &1/8  &1/24 & 4.1382e-05 & 1.9964 &740.08  &1.2104e-04 &2.0060&116.02 \\
     &1/16 &1/48 & 1.0441e-05 & 1.9868 &943.10  &3.0414e-05 &1.9927&220.31 \\
     &1/32 &1/96 & 2.7090e-06 & 1.9464 &1745.60 &7.8167e-06 &1.9601&432.46  \\
     \bottomrule
   \end{tabular}
 \end{table}

 \begin{table}
   \center
   \caption{$L^{2}$-norm error, spatial convergence order and CPU time (seconds) behaviors versus space-mesh reduction with the fixed spatial-size $\tau_f=1/512$, temporal step-size ratio $k_\tau=2$ and spatial step-size ratio $k_h=2$.} \label{tb3}
   \begin{tabular}{ccccccccc}
     \toprule
        & &   & \multicolumn{3}{c}{NCD Scheme \eqref{eq3.7}-\eqref{eq3.11}} &\multicolumn{3}{c}{ST-TGCD Scheme \eqref{eq3.12}-\eqref{step3-5}} \\
     \cmidrule(r){4-6}  \cmidrule(r){7-9}
     $\lambda$& $h_c$ &$ h_f$& $E_{1}$ & $R_{1}^h$ & CPU(s) & $E_{2}$ & $R_{2}^h$ & CPU(s)\\
    \midrule

    \multirow{4}{*}{\centering 1}
     &1/4  &1/8  & 7.0303e-04 & *      &0.35   &7.9502e-04 &*      &0.19  \\
     &1/8  &1/16 & 4.4032e-05 & 3.9970 &2.93   &6.9166e-05 &3.5229 &1.15\\
     &1/16 &1/32 & 2.7553e-06 & 3.9983 &30.58  &4.5795e-06 &3.9168 &11.77 \\
     &1/32 &1/64 & 1.6991e-07 & 4.0194 &639.12 &2.8836e-07 &3.9892 &228.42 \\
      \midrule
     \multirow{4}{*}{\centering 0.2}
     &1/4  &1/8  & 2.1684e-03 & *      &0.46   &3.3272e-03  &*      &0.12  \\
     &1/8  &1/16 & 1.4371e-04 & 3.9154 &3.02   &4.1089e-04  &3.0175 &1.23 \\
     &1/16 &1/32 & 9.1756e-06 & 3.9692 &30.50  &2.8147e-05  &3.8677 &13.16 \\
     &1/32 &1/64 & 5.9249e-07 & 3.9530 &709.62 &1.8130e-06  &3.9565 &235.91 \\
     \bottomrule
   \end{tabular}
 \end{table}

  \begin{figure}
     \centering
  \includegraphics[width=0.75\textwidth]{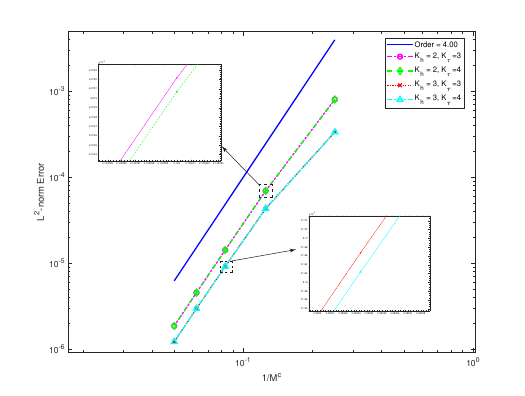}
  \caption{The spatial convergence orders of ST-TGCD scheme for different spatiotemporal step-size ratios $k_h$ and $k_\tau$, but fixed $N^c=128$, $\lambda=1$. }\label{fig2}
 \end{figure}

 \begin{figure}
     \centering
  \includegraphics[width=0.75\textwidth]{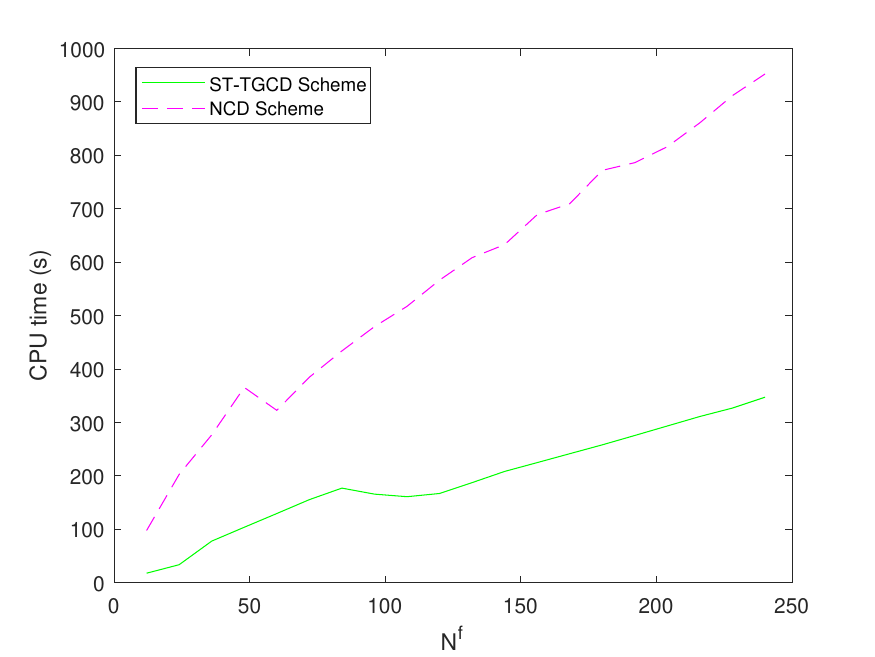}
  \caption{Comparison of CPU time cost between ST-TGCD scheme and NCD scheme, fixed $M^f=80$, $\lambda=1$, and space-time step-size ratios $k_h=2$ and $k_\tau=3$. }\label{fig3}
 \end{figure}

\vskip 1mm
\textit{Tables 1 and 2 (Temporal Refinement):} These tables present the $L^2$-norm errors, temporal convergence orders $(R_{1}^\tau, R_{2}^\tau)$, and CPU time for different viscosity coefficients $\lambda$ and temporal step-size ratios $k_\tau=2$ and $k_\tau=3$, respectively. The spatial mesh size is fixed at $h_f=1/50\;(M^f=100)$, and the spatial step-size ratio is $k_h=2$. The key observations are:
\begin{itemize}
   \item The errors computed by both schemes are very close, indicating comparable accuracy.
   \item Both schemes achieve approximately second-order temporal convergence.
   \item The computational time (CPU(s)) of the ST-TGCD scheme is consistently and significantly lower than that of the NCD scheme. This time saving becomes more pronounced as the mesh is refined. The data show that CPU time is reduced by more than 70\%.
\end{itemize}

\vskip 1mm
\textit{Table 3 (Spatial Refinement):} This table shows the $L^2$-norm errors, spatial convergence orders $(R_{1}^h, R_{2}^h)$, and CPU time for spatial mesh refinement with temporal mesh size fixed at $\tau_f=1/512$, the temporal step-size ratio $k_\tau=2$, and step-size ratio $k_h=2$. The results demonstrate:
\begin{itemize}
   \item Comparable errors between the two schemes.
   \item Both schemes achieve fourth-order spatial convergence across various $\lambda$ values.
   \item The ST-TGCD scheme consistently exhibits markedly lower CPU time than the NCD scheme.
\end{itemize}

 Figure \ref{fig2} shows that the ST-TGCD scheme attains fourth-order spatial convergence for various space-time step-size ratios, exhibiting a rapid convergence rate. Figure \ref{fig3} further reveals that the ST-TGCD algorithm markedly reduces computational time, thereby substantially enhancing overall efficiency.

   \vskip 1mm
   \textbf{Example 2} (\textit{Unknown Exact Solution}). In this example, we set the initial condition as $u_0(x)=\sin(\pi x)\cos(\pi y)$ within $\Omega=[0,1]\times[1/2,3/2]$, and the final time $T=1$. As the exact solution of the system (1.1)-(1.3) is unknown, we estimate convergence rates by comparing solutions on successively refined meshes. Let $ \{u_{ij}^n\} $ be the solution of the NCD scheme using spatial division $M^f$ and temporal division $N^f$. Let $ \{\hat{u}_{ij}^n\} $ denote the solution computed with $2M^f$ and $N^f$, and $ \{\tilde{u}_{ij}^n\} $ the solution computed with $M^f$ and $2N^f$. We define the estimated errors and convergence orders as:
  $$ E_{3}^\tau(h_f,\tau_f):=\sqrt{h_f^2\sum\limits_{i=1}^{M^f}\sum\limits_{j=1}^{M^f}\left( u_{ij}^{N^f}(h_f,\tau_f) -\tilde{u}_{ij}^{2N^f}(h_f,\tau_f/2)\right)^2},\quad R_{3}^{\tau}:=\log_2\left(\frac{E_{3}^\tau(h_f,\tau_f)}{E_{3}^\tau(h_f,\tau_f/2)}\right),$$
  $$ E_{3}^h(h_f,\tau_f):=\sqrt{h_f^2\sum\limits_{i=1}^{M^f}\sum\limits_{j=1}^{M^f}\left( u_{ij}^{N^f}(h_f,\tau_f) -\hat{u}_{2i,2j}^{N^f}(h_f/2,\tau_f)\right)^2},\quad R_{3}^{h}:=\log_2\left(\frac{E_{3}^h(h_f,\tau_f)}{E_{3}^h(h_f/2,\tau_f)}\right).$$
  The corresponding ST-TGCD scheme's metrics $E_{4}^\tau(h_f,\tau_f)$, $E_{4}^h(h_f,\tau_f)$, $R_{4}^\tau$ and $R_{4}^h$ are defined similarly. The numerical results of this example are listed in Tables \ref{tb4}-\ref{tb6} and Figure \ref{fig4and5}.
  
\textit{Accuracy and Computational Efficiency:} Tables \ref{tb4}-\ref{tb6} (showing results analogous to Tables \ref{tb1}-\ref{tb3} but using the estimated errors defined above) confirm that the ST-TGCD scheme can arrive at comparable errors compared with the NCD scheme, while the former scheme achieves these results with substantially lower computational cost compared to the latter scheme, highlighting the efficiency gain of the two-grid approach. {(b)} of Figure \ref{fig4and5} can also intuitively reflect this advantage.

\textit{Convergence Estimates:} Tables \ref{tb4}-\ref{tb6} and {(a)} of Figure \ref{fig4and5} confirm that both schemes achieve the expected second-order temporal and fourth-order spatial convergence rates, which further verify Theorems \ref{Th4.2} and \ref{Th4.4}.

 \begin{table}
   \center
   \caption{$L^{2}$-norm error, temporal convergence order and CPU time (seconds) behaviors versus time mesh reduction with the fixed spatial-size $h_f=1/50$, spatial step-size ratio $k_h=2$ and temporal step-size ratio $k_\tau=2$.} \label{tb4}
   \begin{tabular}{ccccccccc}
     \toprule
        & &   & \multicolumn{3}{c}{NCD Scheme \eqref{eq3.7}-\eqref{eq3.11}} &\multicolumn{3}{c}{ST-TGCD Scheme \eqref{eq3.12}-\eqref{step3-5}} \\
     \cmidrule(r){4-6}  \cmidrule(r){7-9}
     $\lambda$& $\tau_c$ &$ \tau_f$& $E_{3}^\tau$ & $R_{3}^\tau$ & CPU(s) & $E_{4}^\tau$ & $R_{4}^\tau$ & CPU(s)\\
    \midrule

    \multirow{4}{*}{\centering 1}
     &1/64   &1/128  &5.2944e-11 & *      &57.70   &5.3336e-11 & *    &19.62\\
     &1/128  &1/256  &9.7471e-12 & 2.4414 &121.86  &9.7488e-12 &2.4518&38.41 \\
     &1/256  &1/512  &2.4459e-12 & 1.9946 &143.90  &2.4464e-12 &1.9946&76.99 \\
     &1/512  &1/1024 &6.1206e-13 & 1.9986 &277.48  &6.1216e-13 &1.9986&152.97  \\
     \midrule

     \multirow{4}{*}{\centering 0.1}
     &1/64   &1/128  & 2.3613e-06 & *      &63.40  &3.1914e-06 & *    &21.24\\
     &1/128  &1/256  & 5.9038e-07 & 1.9999 &113.25 &7.9788e-07 &1.9999&41.65 \\
     &1/256  &1/512  & 1.4768e-07 & 1.9992 &227.18 &1.9947e-07 &2.0000&82.70 \\
     &1/512  &1/1024 & 3.6955e-08 & 1.9986 &435.07 &4.9866e-08 &2.0001&163.21  \\
     \bottomrule
   \end{tabular}
 \end{table}

  \begin{table}
   \center
   \caption{$L^{2}$-norm error, temporal convergence order and CPU time (seconds) behaviors versus time mesh reduction with the fixed spatial-size $h_f=1/60$, spatial step-size ratio $k_h=3$ and temporal step-size ratio $k_\tau=4$.} \label{tb5}
   \begin{tabular}{ccccccccc}
     \toprule
        & &   & \multicolumn{3}{c}{NCD Scheme \eqref{eq3.7}-\eqref{eq3.11}} &\multicolumn{3}{c}{ST-TGCD Scheme \eqref{eq3.12}-\eqref{step3-5}} \\
     \cmidrule(r){4-6}  \cmidrule(r){7-9}
     $\lambda$& $\tau_c$ &$ \tau_f$& $E_{3}^\tau$ & $R_{3}^\tau$ & CPU(s) & $E_{4}^\tau$ & $R_{4}^\tau$ & CPU(s)\\
    \midrule
  
    \multirow{4}{*}{\centering 1}
     &1/32   &1/128  & 9.5929e-11 & *      &85.39   &1.8058e-10 & *    &44.14\\
     &1/64   &1/256  & 9.7471e-12 & 3.2989 &162.45  &9.7556e-12 &4.2102&87.62 \\
     &1/128  &1/512  & 2.4459e-12 & 1.9946 &345.23  &2.4481e-12 &1.9946&171.38 \\
     &1/256  &1/1024 & 6.1206e-13 & 1.9986 &666.94  &6.1259e-13 &1.9987&342.89  \\
     \midrule
   
     \multirow{4}{*}{\centering 0.1}
     &1/32   &1/128  & 2.3613e-06 & *      &135.31  &6.5266e-06 & *    &43.57\\
     &1/64   &1/256  & 5.9038e-07 & 1.9999 &245.17  &1.6323e-06 &1.9994&86.30 \\
     &1/128  &1/512  & 1.4768e-07 & 1.9992 &628.02  &4.0811e-07 &1.9999&172.76 \\
     &1/256  &1/1024 & 3.6955e-08 & 1.9986 &1114.80 &1.0203e-07 &1.9999&345.35  \\
     \bottomrule
   \end{tabular}
 \end{table}

 \begin{table}
   \center
   \caption{$L^{2}$-norm error, spatial convergence order and CPU time (seconds) behaviors versus space-mesh reduction with the fixed spatial-size $\tau_f=1/512$, temporal step-size ratio $k_\tau=2$ and spatial step-size ratio $k_h=2$.} \label{tb6}
   \begin{tabular}{ccccccccc}
     \toprule
        & &   & \multicolumn{3}{c}{NCD Scheme \eqref{eq3.7}-\eqref{eq3.11}} &\multicolumn{3}{c}{ST-TGCD Scheme \eqref{eq3.12}-\eqref{step3-5}} \\
     \cmidrule(r){4-6}  \cmidrule(r){7-9}
     $\lambda$& $h_c$ &$ h_f$& $E_{3}^h$ & $R_{3}^h$ & CPU(s) & $E_{4}^h$ & $R_{4}^h$ & CPU(s)\\
    \midrule
    
    \multirow{4}{*}{\centering 1}
     &1/4  &1/8  & 2.4736e-12 & *      &0.29   &2.4900e-12 & *    &0.17\\
     &1/8  &1/16 & 1.5376e-13 & 4.0079 &2.00   &1.5581e-13 &3.9983&1.14 \\
     &1/16 &1/32 & 9.5982e-15 & 4.0017 &19.89  &9.7391e-15 &3.9999&12.02 \\
     &1/32 &1/64 & 7.3914e-16 & 3.6988 &509.09 &7.4745e-16 &3.7037&226.68 \\
     \midrule
     \multirow{4}{*}{\centering 0.2}
     &1/4  &1/8  & 5.2671e-06 & *      &0.32  &6.2398e-06 & *    &0.37\\
     &1/8  &1/16 & 3.3347e-07 & 3.9814 &2.88  &5.2386e-07 &3.5743&1.17 \\
     &1/16 &1/32 & 2.0913e-08 & 3.9951 &28.34 &3.4382e-08 &3.9294&11.89 \\
     &1/32 &1/64 & 1.3080e-09 & 3.9990 &609.08&2.1582e-09 &3.9937&233.52 \\
     \bottomrule
   \end{tabular}
 \end{table}

\begin{figure}
   \centering
\subfloat[ ]{
   \label{fig:subfig:a}
   \includegraphics[width=0.47\textwidth]{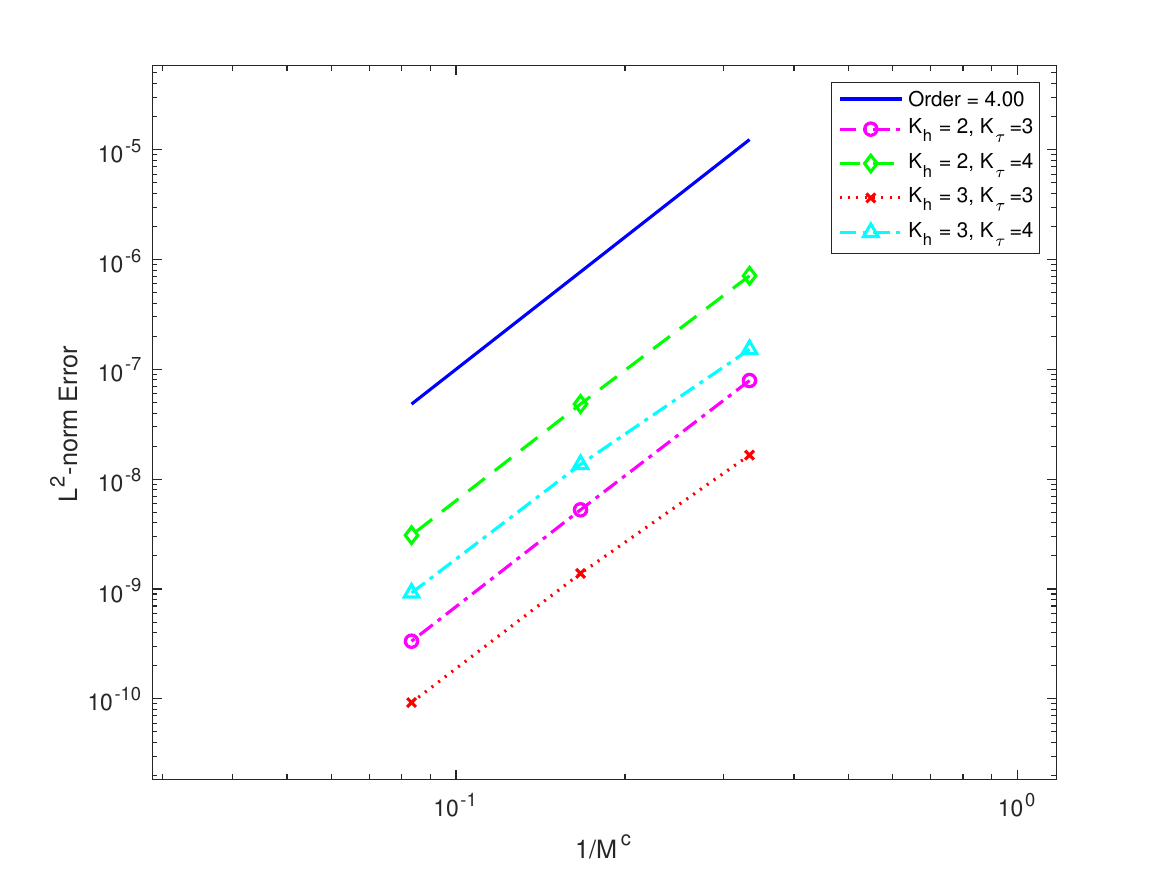}
}
\hspace{0.2em}
\subfloat[ ]{
   \label{fig:subfig:b}
   \includegraphics[width=0.47\textwidth]{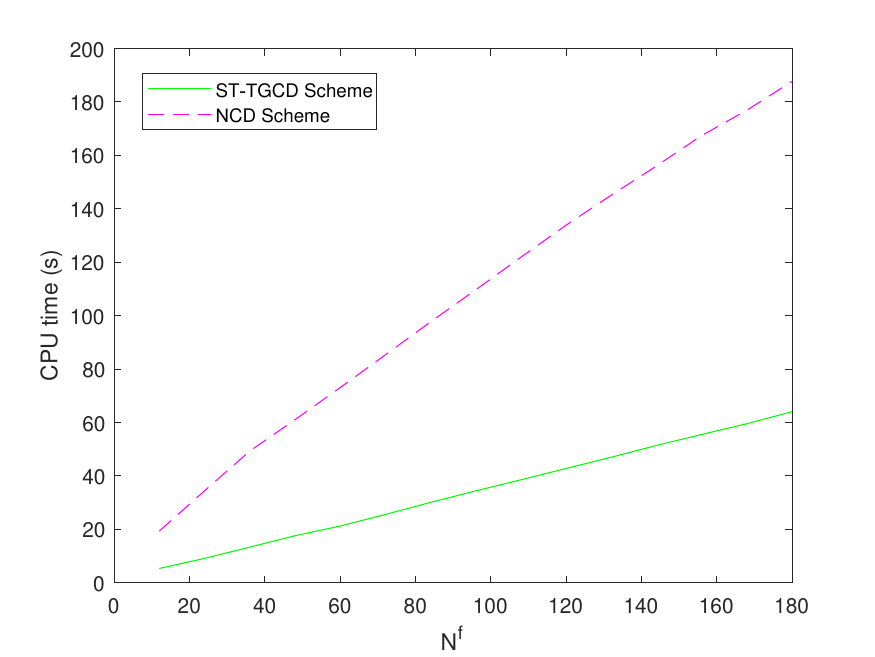}
}
\caption{{(a)} is the spatial convergence orders of ST-TGCD scheme for different space-time step-size ratios $k_h$ and $k_\tau$, but fixed $N^c=128$, $\lambda=0.5$. {(b)} is the comparison of CPU time cost between ST-TGCD scheme and NCD scheme, fixed $M^f=60$, $\lambda=0.1$, and spatiotemporal step-size ratios $k_h=2$ and $k_\tau=3$. }\label{fig4and5}
\end{figure}

\section{Concluding} \label{sec6}
We have constructed and analyzed the ST-TGCD method for the 2D viscous Burgers' equation. By combining a coarse-grid nonlinear solver with a fine-grid linearized compact correction, the algorithm attains second-order accuracy in time and fourth-order accuracy in space while reducing CPU time by more than 70\% relative to the standard nonlinear compact scheme. Theoretical proofs of unconditional convergence have been provided, and numerical experiments confirm the predicted convergence rates and the significant gain in efficiency. These results demonstrate that the ST-TGCD approach is a reliable and high efficient for solving the 2D viscous Burgers’ equation. Based on the framework of ST-TGCD presented in this paper, similar space-time compact difference schemes can be constructed for solving the KdV equation, BBMB equation, KS equation. Furthermore, the corresponding theoretical analysis can be derived in a similar manner.

 \section*{Acknowledgements} 
The authors are grateful to Professor Dongling Wang of Xiangtan University for his helpful discussions and valuable suggestions on this work.

   \bibliographystyle{plain}
   \bibliography{main}

\end{document}